\numberwithin{equation}{section}
\newtheorem{thm}{Theorem}[section]
\newtheorem{lem}[thm]{Lemma}
\newtheorem{prop}[thm]{Proposition}
\newtheorem{cor}[thm]{Corollary}
\theoremstyle{definition}
\newtheorem{defn}[thm]{Definition}
\theoremstyle{remark}
\DeclareMathAlphabet{\mathbbe}{U}{bbold}{m}{n}
\newcommand\lan{\operatorname{Lan}}
\newcommand\ran{\operatorname{Ran}}
\newcommand{\lra}{\longrightarrow}
\newcommand{\Ch}{\mathsf{Ch}}
\newcommand{\A}{\mathsf A}
\newcommand{\cA}{\mathcal A}
\newcommand{\cF}{\mathcal F}
\newcommand{\cO}{\mathcal O}
\newcommand{\cD}{\mathcal D}
\newcommand{\cC}{\mathcal C}
\newcommand{\bQ}{\mathbb Q}
\newcommand{\bR}{\mathbb R}
\newcommand{\cP}{\mathcal{P}}
\newcommand\adjunct[4]{\xymatrix@C=3pc@R=4pc{#1\ar @<1.25ex>[rr]^{#3}&\perp&#2\ar @<1.25ex>[ll]^{#4}}}
\begin{document}
\title{An algebraic model for rational  $G$--spectra over an exceptional subgroup}

\author[K\c{e}dziorek]{Magdalena K\c{e}dziorek}

\address{MATHGEOM \\
    \'Ecole Polytechnique F\'ed\'erale de Lausanne \\
    CH-1015 Lausanne \\
    Switzerland}
\email{magdalena.kedziorek@epfl.ch}

 \keywords {equivariant spectra, model categories, algebraic model}
 \subjclass [2010] {Primary: { 55N91, 55P42, 55P60}}

 \begin{abstract}
We give a simple algebraic model for rational $G$--spectra over an \emph{exceptional} subgroup, for any compact Lie group $G$. Moreover, all our Quillen equivalences are symmetric monoidal, so as a corollary we obtain a monoidal algebraic model for rational $G$--spectra when $G$ is finite. We also present a study of the relationship between induction -- restriction -- coinduction adjunctions  and left Bousfield localizations at idempotents of the rational Burnside ring. 
 \end{abstract}
\maketitle
\tableofcontents
\section{Introduction}

\paragraph{\bf{Modelling the category of rational $G$ -- spectra}}
$G$--spectra are representing objects for cohomology theories designed to take symmetries of spaces into account. Rationalising this category removes topological complexity, but leaves interesting equivariant behaviour. In order to understand this behaviour, we try to find a purely algebraic description of the category, i.e. an algebraic category Quillen equivalent to the category of rational $G$--spectra. As a result the homotopy category of an algebraic model is equivalent to the rational stable $G$ homotopy category, thus all the homotopy information in both is the same. 

Let $G$ be a compact Lie group. We start with the category of rational $G$--spectra, by which we mean the category of $G$--spectra, but with the model structure that is a left Bousfield localization of the stable model structure at the rational sphere spectrum. Thus the weak equivalences are maps which become isomorphisms after applying the rational homotopy group functors, i.e. $\pi_{\ast}^H(-)\otimes \mathbb{Q}$ for all closed subgroups $H$ in $G$.

Therefore we want to find an algebraic category in which calculations should be easier, equipped with a model structure Quillen equivalent to the category of rational $G$--spectra.
Note that the level of accuracy we would like to get is ``up to homotopy", therefore we don't need equivalences of categories (which is usually difficult to get), but (possibly a chain of) Quillen equivalences. If we find such a chain of Quillen equivalences between the category of rational $G$--spectra and some algebraic category we say that we found an ``algebraic model" for rational $G$--spectra. We can then work and perform constructions and calculations in this new setting to get results for the homotopy category of rational $G$--spectra.\\

\paragraph{\textbf{Main result}}
We call a subgroup $H \leq G$ \emph{exceptional} if $N_GH/H$ is finite and $H$ can be completely 
separated from other subgroups of $G$ in a sense that there is an idempotent $e_{(H)_G}$ in the rational Burnside ring $\A(G)_\bQ$ corresponding to the conjugacy class of $H$ in $G$ and $H$ does not contain a subgroup $K$ such that $H/K$ is a (non-trivial) torus (see Definition \ref{defn:exceptional}). 

A category of rational $G$--spectra over an exceptional subgroup $H$ is modelled by the left Bousfield localization at the idempotent $e_{(H)_G}$. This models the homotopy category of rational $G$--spectra with geometric isotropy $H$ and it is a particularly nicely behaved part of rational  $G$--spectra, which in its structure resembles (or generalizes) rational $\Gamma$--spectra for finite $\Gamma$. 

\begin{thm}Suppose $G$ is any compact Lie group and $H$ an exceptional subgroup of $G$. Then there is a zig-zag of symmetric monoidal Quillen equivalences from rational  $G$--spectra over $H$ to $$\Ch(\bQ[N_GH/H]-\mathrm{mod})$$ with the projective model structure.
\end{thm}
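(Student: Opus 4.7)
The plan is to construct the zig-zag in three stages, following the classical Greenlees--May strategy of isolating a single conjugacy class: first reduce from $G$ to the normalizer $N_GH$, then pass to the finite Weyl quotient $W := N_GH/H$ to land in free rational $W$-spectra, and finally identify free rational $W$-spectra with $\Ch(\bQ[W]\text{-mod})$. The exceptional hypothesis is used essentially at each stage so that the intermediate functors are symmetric monoidal Quillen equivalences after localization.

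Stage 1 (change of groups). I would compare rational $G$-spectra localized at $e_{(H)_G}$ with rational $N_GH$-spectra localized at the idempotent $e_{(H)_{N_GH}}$, via the restriction--induction--coinduction adjunctions between $G$- and $N_GH$-spectra. Because $H$ is conjugate only to itself inside $N_GH$, the localization on the $N_GH$ side isolates a single subgroup, and the paper's earlier analysis of how these adjunctions interact with idempotent localizations should yield a symmetric monoidal Quillen equivalence. Rationally, induction and coinduction agree by Wirthm\"uller (using that $W$ is finite), so the (a priori only lax) monoidal structure carried by restriction can be upgraded to strong monoidality on the localized categories.

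Stage 2 (geometric fixed points). Since $H$ is normal in $N_GH$ and, by the exceptional condition, $H$ has no normal subgroup $K$ with $H/K$ a non-trivial torus, the geometric $H$-fixed point functor $\Phi^H$ restricted to the localized category of $N_GH$-spectra over $H$ lands in free rational $W$-spectra and is a strong symmetric monoidal Quillen equivalence; the torus-free hypothesis is what guarantees $\Phi^H$ has the correct homotopical behaviour on this rationalized, localized subcategory. Stage 3 is then the standard identification for finite groups: free rational $W$-spectra admit a zig-zag of symmetric monoidal Quillen equivalences to $\Ch(\bQ[W]\text{-mod})$ with the projective model structure, obtained by smashing with $H\bQ$ to pass to $H\bQ$-modules in $W$-spectra and then applying (a $W$-equivariant version of) Shipley's theorem relating $H\bQ$-modules to chain complexes, carrying the $W$-action along to produce the $\bQ[W]$-module structure.

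The main obstacle will be Stage 1. For an arbitrary subgroup, restriction between $G$- and $N_GH$-spectra is neither a Quillen equivalence nor strong monoidal, and the whole point of working over an exceptional subgroup is to arrange the idempotent $e_{(H)_G}$ so that these obstructions disappear. Verifying compatibility of the monoidal structures after localization --- equivalently, that induction and coinduction become naturally isomorphic upon applying the relevant idempotents --- is the delicate step, and is presumably the principal use of the paper's machinery on idempotents and change-of-groups adjunctions.
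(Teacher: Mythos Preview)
Your three–stage outline matches the paper's architecture, and you correctly locate the novel content in Stage~1. However, several details are misdiagnosed.

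In Stage~1 you repeatedly describe restriction $i^\ast$ as ``a priori only lax monoidal'' and invoke the Wirthm\"uller isomorphism (plus finiteness of $W$) to upgrade it. This is backwards: $i^\ast$ is always strong symmetric monoidal, as a functor, regardless of any localization or finiteness hypothesis. The paper simply uses $i^\ast$ as the \emph{left} adjoint (with coinduction $F_N(G_+,-)$ as the right adjoint), so the Quillen pair is strong monoidal for free. The genuinely delicate point in Section~\ref{section:localization} is not monoidality but the Quillen \emph{equivalence} property: one must show that on $e_{(H)_G}$-- and $e_{(H)_N}$--local objects the derived unit and counit are equivalences (Theorem~\ref{badLeftAdjoint}), and this is where the good/bad dichotomy enters. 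Your proposed verification ``that induction and coinduction become naturally isomorphic'' is neither needed nor what the paper does; note also that $G/N_GH$ need not be zero-dimensional, so Wirthm\"uller would in general introduce a twist.

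In Stage~2 you propose geometric fixed points $\Phi^H$, whereas the paper uses the inflation--categorical fixed point adjunction $(\epsilon^\ast,(-)^H)$: since $\epsilon^\ast$ is strong monoidal and left Quillen, monoidality again comes for free, and the equivalence is obtained by further localizing the Greenlees--Shipley result at $e_1$ and $e_{(H)_N}$. After these localizations $\Phi^H$ and $(-)^H$ agree, so your route would also work, but you would have to supply a compatible adjoint and argue monoidality separately. You also omit the change-of-universe step from $L_{e_1S_\bQ}(W\text{--}\mathrm{Sp}^\cO)$ (genuine $W$-spectra) to $\mathrm{Sp}^\cO_\bQ[W]$ (naive $W$-objects), which is a nontrivial Quillen equivalence in the paper (Lemma~\ref{restrictionTotrivialuniverse}) and is needed before one can invoke Shipley's theorem objectwise in $W$.
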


If $G$ is finite then every subgroup of $G$ is exceptional and there is finitely many conjugacy classes of subgroups of $G$, so by the splitting result of \cite{Barnes_Splitting} the category of rational $G$--spectra splits into a finite product (over conjugacy classes $(H)$ of subgroups of $G$) of rational $G$--spectra over $H$. Thus our approach gives a monoidal algebraic model for rational $G$--spectra for finite $G$ (see also Section \ref{section:finiteG}).

\begin{cor}Suppose $G$ is a finite group. Then there is a zig-zag of symmetric monoidal Quillen equivalences from rational  $G$--spectra to $${\prod_{(H),H\leq G}\Ch(\bQ[N_GH/H]-\mathrm{mod})}$$ with the (objectwise) projective model structure.
\end{cor}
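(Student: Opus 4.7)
The plan is to deduce the corollary directly from the main theorem, using the cited splitting result of Barnes, once we verify that the exceptional hypothesis is vacuous for finite groups.

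First I would check that every subgroup $H \leq G$ is exceptional when $G$ is finite. The Weyl group $N_GH/H$ is automatically finite. For finite $G$ the rational Burnside ring $\A(G)_{\bQ}$ is a product of copies of $\bQ$ indexed by conjugacy classes of subgroups (Gluck's idempotent formula), so the idempotent $e_{(H)_G}$ cutting out the conjugacy class of $H$ exists. Finally, since $H$ is itself finite, no quotient $H/K$ can be a non-trivial torus. Thus Definition \ref{defn:exceptional} is satisfied for every $H \leq G$.

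Next I would invoke the splitting result from \cite{Barnes_Splitting}. Because $G$ is finite there are only finitely many conjugacy classes of subgroups, and the complete set of orthogonal idempotents $\{e_{(H)_G}\}_{(H)}$ sums to $1$ in $\A(G)_{\bQ}$. Barnes's theorem then gives a zig-zag of symmetric monoidal Quillen equivalences between the category of rational $G$--spectra and the product (over conjugacy classes $(H)$) of the left Bousfield localizations at the individual idempotents $e_{(H)_G}$, each factor carrying the symmetric monoidal model structure inherited from the localization. Each such factor is, by definition, rational $G$--spectra over the exceptional subgroup $H$ in the sense of the paper.

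I would then apply the main theorem factor by factor: for each conjugacy class $(H)$, there is a zig-zag of symmetric monoidal Quillen equivalences from rational $G$--spectra over $H$ to $\Ch(\bQ[N_GH/H]\text{-mod})$ with the projective model structure. Taking the product of these zig-zags over the finite set of conjugacy classes yields a single zig-zag of symmetric monoidal Quillen equivalences into $\prod_{(H),\,H\leq G}\Ch(\bQ[N_GH/H]\text{-mod})$ equipped with the objectwise projective model structure and the objectwise symmetric monoidal product.

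The main obstacle is bookkeeping rather than substance: one must check that the product of finitely many symmetric monoidal Quillen equivalences between stable, monoidal model categories remains a symmetric monoidal Quillen equivalence with respect to the objectwise structures, and that the zig-zags of the main theorem can be assembled compatibly (since the zig-zags are finite and all constructions involved are functorial in $H$, no choices obstruct this). Everything else reduces to direct invocation of the theorem and the cited splitting.
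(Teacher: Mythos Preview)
Your proposal is correct and follows exactly the paper's own route: verify that every subgroup of a finite $G$ is exceptional, apply the Barnes splitting (Proposition~\ref{prop:splitting_finite}) to decompose rational $G$--spectra as a finite product of the localized pieces, and then apply the main theorem (Theorem~\ref{monoidalExceptional}) factorwise. The only minor imprecision is that Barnes's splitting gives a single strong symmetric monoidal Quillen equivalence rather than a genuine zig-zag, but that of course only simplifies matters.
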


Results above were obtained using an analysis of the interplay between left Bousfield localizations at idempotents of the rational Burnside ring and the induction -- restriction -- coinduction adjunctions in Section \ref{section:localization}. This analysis is similar in flavour to the one presented in \cite{GreenShipleyFixedPoints} for the inflation--fixed point adjunction. The point is to recognise when these adjunctions become Quillen equivalences in situations that are of intrest to us. Similar analysis is used to obtain an algebraic model for rational $SO(3)$--spectra in \cite{KedziorekSO(3)} and also for the toral part of rational $G$--spectra, for any compact Lie group $G$ in \cite{BGK}. \\

\paragraph{\textbf{Existing work}}
 It is expected that for any compact Lie group $G$ there exists an algebraic category $\cA(G)$ which is Quillen equivalent to that of rational $G$--spectra. 

There are many partial results and examples for specific Lie groups $G$ for which an algebraic model has been given. An algebraic model for rational $G$ equivariant spectra for finite $G$ is described in \cite[Example 5.1.2]{SchwedeShipleyMorita}. It was shown in \cite[Theorem 1.1]{ShipleyHZ} that rational spectra are monoidally Quillen equivalent to chain complexes of $\bQ$ modules.  An algebraic model for rational torus equivariant spectra was presented in \cite{GreenShipleyT-equiv}, whereas a new approach in \cite{BGKS} gives a symmetric monoidal algebraic model for $SO(2)$. Also, an algebraic model for free rational  $G$--spectra was given in \cite{GreenShipFree} for any compact Lie group $G$.

However, there is no algebraic model known for the whole category of rational $G$--spectra for an arbitrary compact Lie group $G$. The present paper establishes the first part of a general result, providing a model for rational $G$--spectra over an exceptional subgroup (see Definition \ref{defn:exceptional}), for any compact Lie group $G$.

The approach to the algebraic model for rational $G$--spectra, where $G$ is finite, in \cite{BarnesFiniteG} requires the use of localizations of commutative ring $G$--spectra. Interpreting it correctly would rely on work of Blumberg and Hill \cite{BlumbergHillL1}. The method of this paper avoids such subtleties and thus presents a more immediate and easier proof of a zig-zag of symmetric monoidal Quillen equivalences in the case where $G$ is a finite group.\\

\paragraph{\textbf{Outline of the paper}}
This paper is structured as follows. In Section \ref{section:subgroups} we describe subgroups of a compact Lie group $G$ and discuss some related idempotents in the rational Burnside ring of $G$. Section \ref{section:spectra} recalls basic properties of $G$ orthogonal spectra that we will use later on. In Section \ref{section:localization} we link the different behaviour of subgroups of $G$ with the left Bousfield localization and the induction -- restriction -- conduction adjunctions. This is the heart of the paper and it allows us to  provide a zig-zag of symmetric monoidal Quillen equivalences in Section \ref{section:monoidal} that instead of using the Morita equivalences presented in \cite{SchwedeShipleyMorita} uses the inflation--fixed point adjunction which is strong symmetric monoidal. \\

\paragraph{\textbf{Notation}}We will stick to the convention of drawing the left adjoint above the right one in any adjoint pair.\\

\paragraph{\textbf{Acknowledgments}}
This is a part of my PhD thesis (under the supervision of John Greenlees) and I would like to thank John Greenlees and Dave Barnes for many useful discussions and comments.

\section{Subgroups of a Lie group G}\label{section:subgroups}

Recall that for $H \leq G$,  $N_GH=\{g \in G\ |\ gH=Hg\}$ is the normaliser of $H$ in $G$. We use the notation $W=W_GH=N_GH/H$ for the Weyl group of $H$ in $G$.

Suppose $\cF(G)$  is a space of closed subgroups of $G$ with finite index in their normalizer (i.e. $H\leq G$ such that $N_GH/H$ is finite) considered with topology given by the Hausdorff metric. 
By the result of tom Dieck (\cite[5.6.4, 5.9.13]{tomDieck} $\A(G)\otimes \bQ =C(\cF(G)/G,\bQ)$, where $C(\cF(G)/G,\bQ)$ denotes the ring of continuous functions on the orbit space $\cF(G)/G$ with values in discrete space $\bQ$. From now on we will use notation $\A(G)_\bQ$ for  $\A(G)\otimes \bQ$. It is clear that idempotents of the rational Burnside ring of $G$ correspond to the characteristic functions on open and closed subspaces of the orbit space  $\cF(G)/G$ (or equivalently, to open and closed subspaces of $\cF(G)/G$). Thus it makes sense to refer to an idempotent $e_{V}$, i.e. the one corresponding to the subspace $V$ in $\cF(G)/G$, provided that $V$ is open and closed in $\cF(G)/G$.

Every inclusion $i: H \lra G$ gives a ring homomorphism $i^*:\A(H)_\bQ \lra \A(G)_\bQ$. To see what is an image of an idempotent under $i^*$ it is better to relate idempotents to the subspaces of the space of all closed subgroups of $G$ as follows. Suppose $Sub_f(G)$ is the topological space of all closed subgroups of $G$ with the $f$-topology (see \cite[Section 8]{GreeRationalMackey1} for details). One can relate an idempotent in a rational Burnside ring $\A(G)_\bQ$ to an open and closed, $G$--invariant subspace of $ Sub_f(G)$ which is a union of $\sim$--equivalence classes (here $\sim$ denotes the equivalence relation generated by $H \sim G$, where $H \sim G$ iff $H \leq G$ and $G/H$ is a torus). 

With this in mind, we give the following

\begin{defn}\label{defn:exceptional}
Suppose $G$ is a compact Lie group. We say that a closed subgroup $H \leq G$ is \emph{exceptional} in $G$ if $W_GH$ is finite, there exist an idempotent $e_{(H)_G}$ in the rational Burnside ring of $G$ corresponding to the conjugacy class of $H$ in $G$ and $H$ does not contain any subgroup cotoral in $H$, where a subgroup $K \leq H$ is cotoral in $H$ if $H/K$ is a (non-trivial) torus.
\end{defn}

Note that any subgroup of a finite group $G$ is exceptional. In $O(2)$ only finite dihedral subgroups are exceptional; in particular none of the finite cyclic subgroups is exceptional. In $SO(3)$ all finite dihedral subgroups are exceptional (except for $D_2$), but we have more: there are four more conjugacy classes of exceptional subgroups: $A_4,\Sigma_4, A_5$ and $SO(3)$, where $A_4$ denotes rotations of a tetrahedron, $\Sigma_4$ denotes rotations of a cube and $A_5$ denotes rotations of a dodecahedron.

We introduced the notion of an exceptional subgroup $H$ because we will use the corresponding idempotent in the rational Burnside ring to split the category of rational  $G$--spectra into the part over an exceptional subgroup $H$ and its complement. In this paper we present the model for rational $G$--spectra over an exceptional subgroup $H$. 

On the way towards the algebraic model different subgroups of $G$ will behave slightly differently. This behaviour is closely related to the following

\begin{defn}
\label{goodSub}
Suppose $H,K$ are closed subgroups of $G$ such that $K$ is exceptional in $G$. Suppose further that $i:H \lra G$ is an inclusion. We say that $K$ is $H$--{\emph{good}} in $G$ if $i^*(e_{(K)_G})= e_{(K)_H}$ and  $H$--{\emph {bad}} in $G$ if it is not $H$--good, i.e. $i^*(e_{(K)_G})\neq e_{(K)_H}$.
\end{defn}

There is a definition of good and bad subgroups in \cite[Definition 6.3]{GreenleesSO3}, however it was designed to capture different properties than our definition and thus they are not the same. As an example, if a trivial subgroup is exceptional in $G$ it is always $H$--good in $G$ for any $H \leq G$ according to our definition and $H$--bad according to Greenlees' definition (unless $H$ is normal in $G$).

It is easy to see that any exceptional subgroup $H$ in a compact Lie group $G$ is $H$--good in $G$.

We present some examples.
\begin{lem}\label{goodAndBadSub}For exceptional subgroups in $G=SO(3)$ we have the following relation between $H$ and its normaliser $N_GH$:
\begin{enumerate}
\item $A_5$ is $A_5$--good in $SO(3)$.
\item $\Sigma_4$ is $\Sigma_4$--good in $SO(3)$.
\item $A_4$ is $\Sigma_4$--good in $SO(3)$.
\item $D_4$ is $\Sigma_4$--bad in $SO(3)$.
\end{enumerate}
\end{lem}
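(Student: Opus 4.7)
The plan is to unwind the condition ``$K$ is $H$-good'' via Tom Dieck's identification $\A(G)_\bQ\cong C(\cF(G)/G,\bQ)$. Under this identification $e_{(K)_G}$ corresponds to the characteristic function of the clopen point $(K)_G$, and the restriction $i^*:\A(G)_\bQ\to\A(H)_\bQ$ is computed by pulling back a function along the partially defined map $\cF(H)/H \to \cF(G)/G,\ (L)_H\mapsto (L)_G$, extended using cotoral equivalence when $W_GL$ is infinite. Consequently $K$ is $H$-good in $G$ precisely when the preimage of $(K)_G$ in $\cF(H)/H$ is the single class $(K)_H$; equivalently, every subgroup $L\leq H$ with $W_HL$ finite which is $G$-conjugate to $K$ is in fact $H$-conjugate to $K$.

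With this criterion the first three items are essentially bookkeeping. For (1) and (2), $H=K$, so trivially $K$ is the only subgroup of $H$ which is $H$-conjugate (or $G$-conjugate) to itself. For (3), $A_4$ is the unique index-two subgroup of $\Sigma_4$ and is normal there; in particular there is exactly one $\Sigma_4$-conjugacy class of subgroups abstractly isomorphic to $A_4$ inside $\Sigma_4$, and it equals $(A_4)_{\Sigma_4}$. So for each of (1)--(3) the criterion above is immediately satisfied and $i^*(e_{(K)_{SO(3)}})=e_{(K)_H}$.

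The interesting case is (4), where the root of the obstruction is that $N_{SO(3)}(D_4)=D_8$, which is \emph{not} a subgroup of $\Sigma_4$ (as $\Sigma_4$ contains no elements of order $8$). One checks that $D_8\cap\Sigma_4=D_4$, so that $W_{\Sigma_4}(D_4)=1$ while $W_{SO(3)}(D_4)=C_2$; this Weyl-group mismatch is what prevents the restriction from being idempotent-preserving. My plan here is to realise $e_{(D_4)_{SO(3)}}$ as an explicit rational combination of basis classes $[SO(3)/L]$ using Gluck-type inversion on the subgroup lattice of $D_4$, then restrict each basis class to $\Sigma_4$ and compute the marks at every $\Sigma_4$-conjugacy class of subgroups of $\Sigma_4$. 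The relevant subtlety that forces the $(D_4)$-mark condition to fail on some other $\Sigma_4$-class is that the two Klein four-subgroups of a Sylow $D_4\leq\Sigma_4$ are $SO(3)$-conjugate (both are copies of $D_2$), but lie in distinct $\Sigma_4$-conjugacy classes: one is the normal $V_4\triangleleft\Sigma_4$ and the other is a non-normal $K_4$. Carrying this through the mark calculation produces a nonzero discrepancy on an additional $\Sigma_4$-class, so $i^*(e_{(D_4)_{SO(3)}})\neq e_{(D_4)_{\Sigma_4}}$.

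The hard part of the proof will be the bookkeeping in (4): keeping straight which conjugacy classes of subgroups of $\Sigma_4$ become identified in $SO(3)$ (e.g.\ the two $\Sigma_4$-classes of Klein fours collapse to a single $SO(3)$-class $(D_2)_{SO(3)}$, and the two $\Sigma_4$-classes of order-$2$ subgroups likewise become a single $(C_2)_{SO(3)}$), and using this to trace precisely how the $SO(3)$-idempotent fails to descend to the correct $\Sigma_4$-idempotent. Once that is sorted, the remaining items reduce to the simple uniqueness observations above.
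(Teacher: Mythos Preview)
Your criterion in the first paragraph is correct, and your treatment of (1)--(3) matches the paper's (which likewise dismisses (1) and (2) as trivial and reduces (3) to the uniqueness of the index-two subgroup of $\Sigma_4$).

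For (4) there is a genuine error coming from a convention clash. In this paper $D_n$ denotes the dihedral group of order $n$, not $2n$: the proof refers to ``the generating rotation by $45$ degrees'' in $D_{16}$, forcing $|D_{16}|=16$. Thus the paper's $D_4$ is the Klein four group, and indeed $N_{SO(3)}(D_4)=\Sigma_4$, consistent with the lemma's preamble. Under your reading ($|D_4|=8$) the normalizer is the order-$16$ dihedral, not $\Sigma_4$, so the framing already does not match. More seriously, the order-$8$ dihedral is actually $\Sigma_4$-\emph{good}: the order-$8$ subgroups of $\Sigma_4$ are exactly the three Sylow $2$-subgroups, which form a single $\Sigma_4$-conjugacy class, so by your own preimage criterion the restriction of its $SO(3)$-idempotent is the correct $\Sigma_4$-idempotent. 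Your planned Gluck-type computation, carried out correctly, would therefore reach the opposite conclusion from the one asserted.

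The irony is that the phenomenon you isolate --- the two $\Sigma_4$-conjugacy classes of Klein four subgroups (the normal $V_4$ and the non-normal ones) collapsing to a single $SO(3)$-class --- is precisely the paper's argument, but for the paper's $D_4$. Once you have the correct group there is no need for any explicit mark computation: your criterion from the first paragraph gives immediately that the preimage of $(D_4)_{SO(3)}$ in $\cF(\Sigma_4)/\Sigma_4$ consists of two classes, whence $i^*(e_{(D_4)_{SO(3)}})\neq e_{(D_4)_{\Sigma_4}}$. The paper makes this concrete by exhibiting two Klein fours inside a fixed $D_8\leq\Sigma_4$ that are conjugated by the $45^\circ$ rotation in $D_{16}$, an element of order $8$ and hence not in $\Sigma_4$.
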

\begin{proof} We only need to prove Part 3 and 4. Part 3 follows from the fact that there is one conjugacy class of $A_4$ in $\Sigma_4$, as there is just one subgroup of index 2 in $\Sigma_4$. Part 4 follows from the observation that there are two subgroups of order 4 in $D_8$ (so also in $\Sigma_4$) and they are conjugate by an element $g\in D_{16}$, which is the generating rotation by 45 degrees (thus $g \notin D_8$ and thus $g\notin \Sigma_4$).
\end{proof}

\section{$G$ orthogonal spectra, left Bousfield localization and splitting}\label{section:spectra}
There are many constructions of categories of spectra ($G$--spectra) equipped with model structures, such that the homotopy category is equivalent to the usual stable homotopy category of spectra ($G$--spectra, respectively). However, since we are interested in modelling the smash product as well, we choose to work with a model with a strictly associative monoidal product compatible with model structure so that its homotopy category is equivalent to the usual stable homotopy category with the smash product known in algebraic topology.

When we work with non equivariant spectra, there are several categories having this property, and we choose to work with the category of symmetric spectra defined in \cite{HoveySSSymSpectra} and discussed in details in \cite{SchwedeUntitledProject}. We  will use it briefly towards the end of Section \ref{monoidalH}. Whenever we are interested in modelling the category of $G$--equivariant cohomology theories we choose to work with the category of $G$--orthogonal spectra defined and described in \cite{MandellMay}, for which we use the notation $G-\mathrm{Sp}^\cO$. 

The construction of both categories is similar and we refer the reader to the papers above for details. The idea is to first construct a diagram of spaces (or simplicial sets) indexed by some fixed category. Then to define a tensor product on the category of diagrams and choose a monoid $S$ (sphere spectrum). Spectra are defined to be $S$-modules. Depending on the indexing category we get symmetric spectra or $G$ orthogonal spectra.

In this section we recall briefly some properties of the category of orthogonal $G$--spectra after Chapter II of \cite{MandellMay}.  We stress that unless otherwise stated we use the term of $G$ orthogonal spectra to implicitly mean the ones indexed on a complete $G$ universe. By \cite[Theorem 4.2.]{MandellMay} there is a model structure on orthogonal $G$--spectra, called the stable model structure where a map of orthogonal spectra $f: X \lra Y$ is a weak equivalence if it is a $\pi_{\ast}$--isomorphism (i.e. it is a $\pi_\ast^H$--isomorphism for all $H\leqslant G$). This model structure is cofibrantly generated, stable, monoidal, proper and cellular (see \cite[Theorem III 4.2]{MandellMay}). 

What is more, we have a good way of checking that a map in $G-\mathrm{Sp}^\cO$ is a weak equivalence. For any closed subgroup $H$ in $G$, any orthogonal spectrum $X$ and integers $p \geq 0$ and $q>0$
\begin{equation}\label{generatorsG/H}[\Sigma^pS^0\wedge G/H_+,X]^G \cong \pi^H_p(X) \ \ \ [F_qS^0\wedge G/H_+,X]^G \cong \pi^H_{-q}(X)
\end{equation}
where the left hand sides denote morphisms in the homotopy category of $G-\mathrm{Sp}^\cO$ and $F_q -$ is the left adjoint to the evaluation functor at $\bR^q$, $Ev_{\bR^q}(X)=X(\bR^q)$. 

There is one more property which makes the stable model structure on $G-\mathrm{Sp}^\cO$ easy to work with, namely it has a set of homotopically compact generators.
By \cite[Definition 7.1.1]{Hovey} a homotopy category of a stable model category is triangulated. In this setting we can make the following definitions after \cite[Definition 2.1.2]{SchwedeShipleyMorita}.

\begin{defn}\label{compactobj}
Let $\cC$ be a triangulated category with infinite coproducts. A full triangulated subcategory of $\cC$ (with shift and triangles induced from $\cC$) is called \emph{localizing} if it is closed under coproducts in $\cC$. A set $\cP$ of objects of $\cC$ is called a  \emph{set of generators} if the only localizing subcategory of $\cC$ containing objects of $\cP$ is the whole of $\cC$. An object $X$ in $\cC$ is  \emph{homotopically compact}\footnote{We chose to emphasize the word "homotopically", since there are several different meanings of compactness in the literature.} if for any family of objects $\{A_i\}_{i \in I}$ the canonical map  
$$\bigoplus_{i\in I}[X,A_i]^{\cC} \lra [X, \coprod_{i \in I}A_i]^{\cC}$$ is an isomorphism.
An object of a stable model category is called a homotopically compact generator if it is so when considered as an object of the homotopy category.
\end{defn}

The set of suspensions and desuspensions of $G/H_+$, where $H$ varies through all closed subgroups of $G$, is a set of homotopically compact generators in the stable model category $G-\mathrm{Sp}^\cO$. Those objects are homotopically compact since homotopy groups commute with coproducts and it is clear from \cite[Lemma 2.2.1]{SchwedeShipleyMorita} and  (\ref{generatorsG/H}) that this is a set of generators for $G-\mathrm{Sp}^\cO$.

There is an easy-to-check condition for a Quillen adjunction between stable model categories with sets of homotopically compact generators to be a Quillen equivalence:

\begin{lem}\label{qequiv_generators}Suppose $F: \cC \rightleftarrows \cD : U$ is a Quillen pair between stable model categories with sets of homotopically compact generators, such that the right derived functor $RU$ preserves coproducts (or equivalently, such that the left derived functor sends homotopically small generators to homotopically small objects). Then to know $F,U$ is a Quillen equivalence it is enough to check that a derived unit and counit are weak equivalences for generators. 
\end{lem}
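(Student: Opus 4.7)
The plan is to reduce the statement to the standard localizing subcategory argument. Recall that a Quillen pair is a Quillen equivalence exactly when the derived unit $\eta_X : X \to RULF(X)$ is an isomorphism in $\Ho(\cC)$ for every $X$ and the derived counit $\ve_Y : LFRU(Y) \to Y$ is an isomorphism in $\Ho(\cD)$ for every $Y$. So it suffices to prove that the full subcategories
$$\cE_\cC = \{X \in \Ho(\cC) \mid \eta_X \text{ is an isomorphism}\}, \qquad \cE_\cD = \{Y \in \Ho(\cD) \mid \ve_Y \text{ is an isomorphism}\}$$
are the whole of $\Ho(\cC)$ and $\Ho(\cD)$ respectively.

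First I would check that $\cE_\cC$ is a localizing subcategory in the sense of Definition \ref{compactobj}. Since $\cC$ and $\cD$ are stable, the derived functors $LF$ and $RU$ are exact functors of triangulated categories, so they commute with the shift and send distinguished triangles to distinguished triangles. Applying the five lemma for triangulated categories to the natural transformation $\eta$ evaluated on a triangle shows that $\cE_\cC$ is a triangulated subcategory. For closure under coproducts, note that $LF$ always commutes with coproducts (it is a total left derived functor, hence a left adjoint on homotopy categories), and $RU$ commutes with coproducts by the hypothesis. Naturality of $\eta$ then forces $\eta_{\coprod X_i}$ to be the coproduct of the $\eta_{X_i}$, so $\cE_\cC$ is closed under arbitrary coproducts. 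By assumption the generating set is contained in $\cE_\cC$, and the definition of a set of generators then gives $\cE_\cC = \Ho(\cC)$. Exactly the same argument applies to $\cE_\cD$.

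The equivalence between the two formulations of the coproduct hypothesis mentioned in parenthesis is formal: for any $X \in \Ho(\cC)$ and any family $\{A_i\}$ in $\Ho(\cD)$, the adjunction gives
$$[LF(X),\coprod_i A_i]^\cD \cong [X, RU\coprod_i A_i]^\cC \quad \text{and}\quad \bigoplus_i [LF(X), A_i]^\cD \cong \bigoplus_i [X, RU A_i]^\cC.$$
If $X$ ranges over a homotopically compact generating set, these being isomorphic for all $\{A_i\}$ is equivalent to the canonical map $\coprod_i RU A_i \to RU\coprod_i A_i$ being an isomorphism, i.e.\ to $RU$ preserving coproducts; and this is the same condition as $LF$ sending the chosen compact generators to homotopically compact objects.

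The argument is almost entirely formal; the only genuine subtlety is verifying that the hypothesis ``$RU$ preserves coproducts'' really does give closure of $\cE_\cC$ and $\cE_\cD$ under coproducts — which is what I would regard as the main (minor) obstacle, since both unit and counit must be shown to commute with coproducts, and this must be tracked through the naturality square relating $\eta_{\coprod X_i}$ to $\coprod \eta_{X_i}$ and similarly for $\ve$.
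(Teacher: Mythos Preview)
Your proposal is correct and follows essentially the same localizing subcategory argument as the paper's proof, only spelled out in considerably more detail: you verify explicitly that the subcategories on which the derived unit and counit are isomorphisms are triangulated (via the five lemma) and closed under coproducts (using that $RU$ preserves coproducts), whereas the paper simply asserts these facts. Your additional paragraph unpacking the parenthetical equivalence of hypotheses is also correct and goes beyond what the paper provides.
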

\begin{proof}This follows from the fact that the homotopy category of a stable model category is a triangulated category. As the derived unit and counit conditions are satisfied for a set of objects $\mathcal{K}$ then they are also satisfied for every object in the localizing subcategory for $\mathcal{K}$. Since $\mathcal{K}$ consisted of generators the localizing subcategory for $\mathcal{K}$ is the whole category. 
\end{proof}

Our basic category to work with is the category $G-\mathrm{Sp}^\cO$ of $G$--orthogonal spectra. However, in this paper we are interested only in the homotopy category of rational  $G$--spectra over an exceptional subgroup. Localization is our main tool to make the model category of $G$--spectra easier, so that it models exactly the part that we want. We obtain it by firstly rationalising the stable model category of  $G$--spectra using the localization at an object $S_\bQ$, which is a rational sphere spectrum. Then, we localize it further to extract the behaviour of an exceptional subgroup.

For details on left Bousfield localization at an object we refer the reader to \cite{MandellMay}. We recall the following result, which is \cite[Chapter IV, Theorem 6.3]{MandellMay}

\begin{thm}Suppose $E$ is a cofibrant object in $G-\mathrm{Sp}^\cO$ or a cofibrant based $G$--space. Then there exists a new model structure on the category $G-\mathrm{Sp}^\cO$, where a map $f:X \lra Y$ is
\begin{itemize}
\item[-] a weak equivalence if it is an $E$--equivalence, i.e. $Id_E\wedge f: E\wedge X \lra E\wedge Y$ is a weak equivalence
\item[-] cofibration if it is a cofibration with respect to the stable model structure
\item[-] fibration if it has the right lifting property with respect to all trivial cofibrations.
\end{itemize}
The $E$--fibrant objects $Z$ are the $E$--local objects, i.e. $[f, Z]^G: [Y,Z]^G \lra [X,Z]^G$ is an isomorphism for all $E$--equivalences $f$.  
$E$--fibrant approximation gives Bousfield localization $\lambda : X\lra L_EX$ of $X$ at $E$. 
\end{thm}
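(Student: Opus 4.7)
The plan is to realize this new model structure as a left Bousfield localization of the stable model structure on $G-\mathrm{Sp}^\cO$ at a small set of maps, and then invoke Hirschhorn's general theorem on left Bousfield localizations of left proper cellular model categories. Both left properness and cellularity of the stable model structure on $G-\mathrm{Sp}^\cO$ are already recorded in \cite[Theorem III 4.2]{MandellMay}, so the substantive work is to produce a set of maps $S$ whose class of $S$-local equivalences coincides with the class of $E$-equivalences.

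First I would observe that smashing with the cofibrant object $E$ (when $E$ is a cofibrant based $G$-space this means levelwise smash, equivalently smashing with the suspension spectrum $\Sigma^\infty E$) is a left Quillen endofunctor of $G-\mathrm{Sp}^\cO$. This ensures that the class of $E$-equivalences is well behaved: it satisfies two-out-of-three, is closed under retracts, and contains all stable equivalences. Next, using that $G-\mathrm{Sp}^\cO$ has a set of homotopically compact generators given by suspensions and desuspensions of $G/H_+$, I would construct $S$ as a set of maps built from those generators and $E$ that detects, via the function spectrum adjunction $[E\wedge X,Z]^G \cong [X, F(E,Z)]^G$, exactly the $E$-local fibrant objects. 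A natural candidate for $S$ is obtained by applying $E\wedge -$ to the generating trivial cofibrations of the stable model structure, together with enough suspensions and desuspensions to cover every homotopical compact generator.

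Once such an $S$ is in place, Hirschhorn's theorem directly delivers the new model structure with cofibrations unchanged and fibrations detected by RLP against trivial cofibrations. The characterisation of fibrant objects as $E$-local and the identification of fibrant replacement with Bousfield localization $\lambda : X \to L_E X$ are then formal consequences of the general localization machinery. The main technical step, and the one I expect to be the principal obstacle, is verifying that the $S$-local equivalences coincide exactly with the $E$-equivalences: one inclusion is immediate from the construction of $S$, while the reverse inclusion uses the triangulated structure on $\mathrm{Ho}(G-\mathrm{Sp}^\cO)$ together with homotopical compactness of the generators $G/H_+$ to reduce the check to a small set of homotopy classes.
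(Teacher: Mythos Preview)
The paper does not prove this statement at all; it is simply quoted as \cite[Chapter IV, Theorem 6.3]{MandellMay}, so there is no argument in the paper to compare your proposal against.

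That said, your outline has a genuine gap in the construction of the set $S$. Applying $E\wedge-$ to the generating trivial cofibrations of the stable model structure produces maps that are already stable equivalences (smashing with a cofibrant object is left Quillen), so inverting those maps changes nothing; and the phrase ``enough suspensions and desuspensions to cover every homotopical compact generator'' does not specify a set whose local objects are the $E$-local ones. The function-spectrum adjunction $[E\wedge X,Z]^G\cong[X,F(E,Z)]^G$ lets you detect when $F(E,Z)$ is trivial, but $E$-locality of $Z$ is the condition $[X,Z]^G=0$ for all $E$-acyclic $X$, and compactness of the $G/H_+$ does not by itself reduce that to a small set of test maps. The substantive content of the theorem, as carried out in \cite[Chapter IV, \S6]{MandellMay}, is the Bousfield--Smith cardinality argument: one finds a regular cardinal $\kappa$ such that every nontrivial $E$-acyclic cell spectrum contains a nontrivial $E$-acyclic subcomplex with fewer than $\kappa$ cells, and then takes $S$ to consist of representatives of the inclusions $\ast\to A$ with $A$ an $E$-acyclic cell spectrum of size below $\kappa$. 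Only once that step is in place does the remainder of your plan (or an appeal to Hirschhorn) go through.
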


We use the notation $L_E(G-\mathrm{Sp}^\cO)$ for the model category described above and will refer to it as a left Bousfield localization of the category of  $G$--spectra at $E$. Notice that if $E$ and $F$ are cofibrant objects in $G-\mathrm{Sp}^\cO$  then the localization first at $E$ and then at $F$ is the same (model category) as the localization at $E\wedge F$.

Recall that, an $E$--equivalence between $E$--local objects is a weak equivalence (see \cite[Theorems 3.2.13 and 3.2.14]{Hirschhorn}).
All our localizations are smashing (see \cite{Ravenel_Localization} for definition) thus they preserve homotopically compact generators (since the fibrant replacement preserves infinite coproducts).

As mentioned above, the first simplification of a category of  $G$--spectra is rationalization, i.e. localization at an object $S_\bQ$, which is a rational sphere spectrum (the Eilenberg--Moore spectrum for $\bQ$, see for example \cite[Definition 5.1]{Barnes_Splitting}). This spectrum has the property that $\pi_*(X \wedge S_\bQ)=\pi_*(W)\otimes \bQ$. 
We refer to this model category as  rational  $G$--spectra. 

The next step on the way towards the algebraic model is to split the category of rational  $G$--spectra using idempotents of the rational Burnside ring $\A(G)_\bQ$. We know that idempotents of the (rational) Burnside ring split the homotopy category of (rational) $G$--spectra. Barnes' result \cite{Barnes_Splitting} allows us to perform a compatible splitting at the level of model categories. We want to use the idempotent $e_{(H)_G}$ corresponding to the exceptional subgroup $H$ in $G$ (see Definition \ref{defn:exceptional}) and the idempotent corresponding to its complement $1-e_{(H)_G}$. By \cite[Theorem 4.4]{Barnes_Splitting} this gives a monoidal Quillen Equivalence.

\begin{prop}\label{prop:splittingofexep} There is a strong symmetric monoidal Quillen equivalence:
\[
\xymatrix{
\triangle\ :\ L_{S_\bQ}(G -\mathrm{Sp}^\cO)\ \ar@<+1ex>[r] & \ L_{e_{(H)_G} S_\bQ}(G-\mathrm{Sp}^\cO)\times L_{(1-e_{(H)_G}) S_\bQ}(G-\mathrm{Sp}^\cO)\ :\Pi \ar@<+0.5ex>[l]
}
\]
where the left adjoint is a diagonal functor, the right one is a product and the product category on the right is considered with the objectwise model structure (a map $(f_1,f_2)$ is a weak equivalence, a fibration or a cofibration if both factors $f_i$ are).
\end{prop}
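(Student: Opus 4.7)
The plan is to invoke the general splitting result of Barnes, namely \cite[Theorem 4.4]{Barnes_Splitting}, which takes a complete set of orthogonal idempotents in the endomorphism ring of the unit of a stable symmetric monoidal model category and produces a strong symmetric monoidal Quillen equivalence between the ambient category and the product of the associated Bousfield localizations. Here the ambient category will be $L_{S_\bQ}(G-\mathrm{Sp}^\cO)$; as recalled earlier in Section \ref{section:subgroups}, its ring of self-maps of the unit is tom Dieck's ring $\A(G)_\bQ$, so the idempotents we feed into Barnes' machine will live there.

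First I would verify the hypotheses. By the assumption that $H$ is exceptional (Definition \ref{defn:exceptional}), the idempotent $e_{(H)_G}\in\A(G)_\bQ$ exists. The pair $\{e_{(H)_G},\,1-e_{(H)_G}\}$ is then manifestly a complete set of orthogonal idempotents in $\A(G)_\bQ$, since they are orthogonal ($e_{(H)_G}(1-e_{(H)_G})=0$) and sum to the unit. Lifting them to self-maps of the cofibrant rational sphere spectrum, the associated smashing left Bousfield localizations of $L_{S_\bQ}(G-\mathrm{Sp}^\cO)$ are precisely the two factors $L_{e_{(H)_G} S_\bQ}(G-\mathrm{Sp}^\cO)$ and $L_{(1-e_{(H)_G})S_\bQ}(G-\mathrm{Sp}^\cO)$ appearing in the statement.

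Applying \cite[Theorem 4.4]{Barnes_Splitting} to this pair of idempotents yields exactly the diagonal-product adjunction $(\triangle,\Pi)$ of the statement and asserts that it is a strong symmetric monoidal Quillen equivalence, with the right-hand side endowed with the objectwise product model structure and the objectwise symmetric monoidal product. Since this is essentially a direct appeal to a known theorem, the only real content to highlight is that the definition of an exceptional subgroup was rigged precisely so that the required idempotent exists in $\A(G)_\bQ$; once that is in hand, the input to Barnes' theorem is complete and no further obstacle arises.
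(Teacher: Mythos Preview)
Your proposal is correct and matches the paper's approach exactly: the paper simply cites \cite[Theorem 4.4]{Barnes_Splitting} in the sentence preceding the proposition and gives no further proof. Your write-up is just a slightly more explicit unpacking of why the hypotheses of that theorem are met, namely that the definition of exceptional guarantees $e_{(H)_G}\in\A(G)_\bQ$ exists, so $\{e_{(H)_G},\,1-e_{(H)_G}\}$ is a complete orthogonal idempotent decomposition.
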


From now on we will work only with the category $L_{e_{(H)_G} S_\bQ}(G-\mathrm{Sp}^\cO)$ as this is our model for rational  $G$--spectra over an exceptional subgroup $H$. 

We use the name \begin{em}{$H$--equivalence}\end{em} for a weak equivalences in the category $L_{e_{(H)_G}S_{\bQ}}(G-\mathrm{Sp}^\cO)$ and \begin{em}{$H$--fibrant replacement}\end{em} for the fibrant replacement there. These names are motivated by the following

\begin{lem} A map $f$ between $e_{(H)_G}S_\bQ$ - local objects is a weak equivalence in  $L_{e_{(H)_G}S_{\bQ}}(G-\mathrm{Sp}^\cO)$ if $\pi_\ast^H(f)$ is an isomorphism.
\end{lem}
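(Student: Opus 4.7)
The plan is to reduce the statement to showing that $f$ is a $\pi_*$--isomorphism in the underlying (rational) stable model structure on $G$--orthogonal spectra. Indeed, by \cite[Theorem 3.2.13]{Hirschhorn} (recalled just before the splitting proposition as ``an $E$--equivalence between $E$--local objects is a weak equivalence''), a map between $e_{(H)_G}S_\bQ$--local objects is a local weak equivalence if and only if it is a weak equivalence in the underlying rational stable model structure, that is, if and only if $\pi_\ast^K(f)$ is an isomorphism for every closed subgroup $K\leq G$. So the task is to upgrade the hypothesis on $K=H$ to all closed $K$.

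I would split the closed subgroups into three cases. First, if $K$ is $G$--conjugate to $H$, then conjugation induces a natural isomorphism $\pi_\ast^K(-)\cong\pi_\ast^H(-)$, so $\pi_\ast^K(f)$ is an isomorphism by hypothesis. Second, for all remaining $K$, the plan is to argue that $\pi_\ast^K(X)=0=\pi_\ast^K(Y)$, so that $\pi_\ast^K(f)$ is trivially an isomorphism. Since the localization at $e_{(H)_G}S_\bQ$ is smashing, $X$ is stably equivalent to $X\wedge e_{(H)_G}S_\bQ$ (and similarly for $Y$), so the idempotent $e_{(H)_G}\in\A(G)_\bQ$ acts as the identity on $\pi_\ast^K(X)$. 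On the other hand, the $\A(G)_\bQ$--action on $\pi_\ast^K(X)$ factors through the restriction $i^*\colon\A(G)_\bQ\to\A(K)_\bQ$, so one needs to see that $i^*(e_{(H)_G})=0$ whenever $K$ is not $G$--conjugate to $H$. Using the identification of $\A(G)_\bQ$ with $C(\cF(G)/G,\bQ)$ and of $\A(K)_\bQ$ with $C(\cF(K)/K,\bQ)$ recalled at the start of Section~\ref{section:subgroups}, the element $e_{(H)_G}$ is the characteristic function of $\{(H)_G\}$, and its image under $i^*$ is the characteristic function of the preimage of $(H)_G$ in $\cF(K)/K$. A subgroup $L\leq K$ maps to $(H)_G$ precisely when $L$ is $G$--conjugate to $H$, and if such an $L\neq K$ existed one would obtain a cotoral chain into $H$, contradicting the assumption that $H$ is exceptional; hence the preimage is empty unless $K$ itself is $G$--conjugate to $H$. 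This gives $i^*(e_{(H)_G})=0$ and therefore $\pi_\ast^K(X)=0$.

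The main technical obstacle is precisely the computation of $i^*(e_{(H)_G})$ in $\A(K)_\bQ$ and the use of the exceptional hypothesis (specifically, the absence of non--trivial cotoral subgroups in $H$) to rule out further conjugacy classes in the preimage. Once this is in hand, combining the three cases shows $\pi_\ast^K(f)$ is an isomorphism for every closed $K\leq G$, hence $f$ is a weak equivalence in $L_{e_{(H)_G}S_\bQ}(G-\mathrm{Sp}^\cO)$.
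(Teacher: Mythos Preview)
The paper states this lemma without proof, so there is no argument to compare against directly. However, your approach contains a genuine error in the third step.

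The claim that $\pi_\ast^K(X)=0$ for every closed $K$ not $G$--conjugate to $H$ is false, and so is the supporting claim that $i^*(e_{(H)_G})=0$ in $\A(K)_\bQ$ for such $K$. Take the simplest possible case $K=G$ (with $H\neq G$): then $i^*$ is the identity, so $i^*(e_{(H)_G})=e_{(H)_G}\neq 0$, and indeed $\pi_\ast^G(e_{(H)_G}S_\bQ)=e_{(H)_G}\A(G)_\bQ\neq 0$. More generally, whenever $K$ contains a $G$--conjugate of $H$ one has $i^*(e_{(H)_G})\neq 0$. Your ``cotoral chain'' justification does not work: an inclusion $L\lneq K$ with $L$ $G$--conjugate to $H$ is in no way forced to be cotoral, so the exceptionality of $H$ gives no contradiction. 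The description of $i^*$ as precomposition with a map $\cF(K)/K\to\cF(G)/G$ is also problematic, since a subgroup $L\leq K$ with $W_KL$ finite need not have $W_GL$ finite; this is exactly why the paper passes to $Sub_f(G)$ and the $\sim$--relation to discuss restriction of idempotents.

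A correct route is to replace categorical fixed points by geometric ones. Since $\Phi^K$ is monoidal and $\Phi^K(e_{(H)_G}S_\bQ)\simeq 0$ for $K$ not $G$--conjugate to $H$ (this is where exceptionality is used: the $\sim$--class of $H$ is the single conjugacy class $(H)_G$), any $e_{(H)_G}S_\bQ$--local $X$ satisfies $\Phi^K(X)\simeq 0$ for such $K$. In particular $\mathrm{res}_H X$ has trivial geometric $L$--fixed points for every proper $L\lneq H$, so $\mathrm{res}_H X\simeq \widetilde{E}\mathcal{P}\wedge\mathrm{res}_H X$ with $\mathcal{P}$ the family of proper subgroups of $H$, and hence $\pi_\ast^H(X)\cong\pi_\ast(\Phi^H X)$. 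Thus $\pi_\ast^H(f)$ an isomorphism forces $\Phi^H(f)$ to be an equivalence, and since all other geometric fixed points already vanish, $f$ is a $\pi_\ast$--isomorphism between local objects, hence a local weak equivalence by the Hirschhorn result you cited. Equivalently, this shows that $e_{(H)_G}G/H_+$ is a compact generator of the localized category, a fact the paper uses repeatedly in Section~\ref{section:localization}.
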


\section{Change--of--group functors and localizations using idempotents}\label{section:localization}

Since later we will be interested in taking $H$--fixed points of  $G$--spectra when $H$ is not necessary normal in $G$, we need to pass to $N=N_GH$--spectra first. Suppose we have an inclusion $i: N \hookrightarrow G$ of a subgroup $N$ in a group $G$. This gives a pair of adjoint functors at the level of orthogonal spectra (see for example \cite[Section V.2 ]{MandellMay}), namely induction, restriction and coinduction as below (the left adjoint is above the corresponding right adjoint)

\[
\xymatrix@C=6pc{
G-\mathrm{Sp}^\cO\ 
\ar@<-0ex>[r]|-(.5){\ i^*\ }
&
N-\mathrm{Sp}^\cO\  
\ar@/^1pc/[l]^(.5){F_N(G_+,-)}
\ar@/_1pc/[l]_(.5){G_+\wedge_N -}
}
\]

These two pairs of adjoint functors are Quillen pairs and restriction as a right adjoint is used for example when we want to take $H$--fixed points of  $G$--spectra, where $H$ is not a normal subgroup of $G$. The first step then is to restrict to $N_GH$--spectra and then take $H$--fixed points. This is usually done in one go, since the restriction and $H$--fixed points are both right Quillen functors.\bigskip

It is natural to ask when the pair of adjunctions above passes to the localized categories, in our case localized at $e_{(H)_G}S_\bQ$ and $e_{(H)_N}S_\bQ$ respectively.  The answer is related to $H$ being an $N$--good or bad subgroup in $G$. It turns out that the induction -- restriction adjunction does not always induce a Quillen adjunction on the localized categories, unless $H$ is $N$--good in $G$. However, the restriction -- coinduction adjunction induces a Quillen adjunction on the localized categories, for all exceptional subgroups $H$. Before we discuss this particular adjunction we state a general result.

\begin{lem}\label{locAdjAtObject} Suppose that $F: \cC \rightleftarrows \cD: R$ is a Quillen adjunction of model categories where the left adjoint is strong monoidal. Suppose further that $E$ is a cofibrant object in $\cC$ and that both $L_E\cC$ and $l_{F(E)}\cD$ exist. Then 
\[
\xymatrix{
F \ :\ L_{E}\cC \  \ar@<+1ex>[r] & \  L_{F(E)}\cD\ : R \ar@<+0.5ex>[l]
}
\]
is a strong monoidal Quillen adjunction. Moreover if the original adjunction was a Quillen equivalence then the one induced on the level of localized categories is as well.
\end{lem}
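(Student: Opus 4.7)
My plan is to split the proof into two parts: verify that $(F,R)$ descends to a Quillen adjunction on the localized categories, then upgrade to a Quillen equivalence when the original was one. The key leverage throughout is that the strong monoidal structure on $F$ combined with cofibrancy of $E$ allows us to transport $E$-equivalences across the adjunction to $F(E)$-equivalences.

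For the Quillen adjunction part, the decisive observation is that left Bousfield localization at a cofibrant object preserves the class of cofibrations, so $F$ automatically preserves cofibrations between the localized structures. To verify that $F$ preserves trivial cofibrations in the localized sense it suffices, by a standard criterion such as \cite[Theorem 3.3.20]{Hirschhorn}, to show that $F$ sends $E$-equivalences between cofibrant objects to $F(E)$-equivalences. Given such a map $f : X \to Y$, the map $\mathrm{id}_E \wedge f : E \wedge X \to E \wedge Y$ is by definition a weak equivalence in $\cC$, and by the pushout-product axiom applied to cofibrant $E$, $X$, $Y$, its source and target are cofibrant. Since $F$ is left Quillen, Ken Brown's lemma yields that $F(\mathrm{id}_E \wedge f)$ is a weak equivalence in $\cD$; the strong monoidal isomorphism $F(E \wedge -) \cong F(E) \wedge F(-)$ then identifies this map with $\mathrm{id}_{F(E)} \wedge F(f)$, so $F(f)$ is an $F(E)$-equivalence as required. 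The strong monoidal structure on $F$ is inherited from the underlying categories and is unchanged by the localization.

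For the Quillen equivalence part, I would use the characterization that $(F,R)$ is a Quillen equivalence iff the right adjoint reflects weak equivalences between fibrant objects and the derived unit is a weak equivalence on cofibrant objects. Let $X$ be cofibrant in $L_E\cC$ (equivalently in $\cC$) and consider the derived unit $X \to R(\widetilde{F(X)})$, where $\widetilde{F(X)}$ denotes an $L_{F(E)}$-fibrant replacement in $\cD$. One first checks that $R$ sends $F(E)$-local fibrant objects to $E$-local fibrant objects: fibrancy is automatic from right Quillenness, and $E$-locality follows from the adjunction isomorphism $[g, R(Y)]^\cC \cong [F(g), Y]^\cD$ together with the transport of $E$-equivalences to $F(E)$-equivalences established in the first part (after cofibrant replacement of $g$). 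The natural map $F(X) \to \widetilde{F(X)}$ is an $F(E)$-equivalence in $\cD$ between cofibrant objects, and applying $R$ gives an $E$-equivalence in $\cC$ by the same argument. Comparing with the derived unit in the original (unlocalized) Quillen equivalence via a two-out-of-three argument produces the result. The main obstacle is bookkeeping between the different fibrant replacements in $\cC$ versus $L_E\cC$ and in $\cD$ versus $L_{F(E)}\cD$; strong monoidality is precisely the hypothesis that makes these replacements compatible under $F$ and $R$.
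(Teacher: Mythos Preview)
Your argument for the Quillen adjunction is essentially the paper's, though you route it through Hirschhorn's criterion and Ken Brown where the paper checks preservation of acyclic cofibrations directly: if $f$ is a trivial cofibration in $L_E\cC$ then $f\wedge\id_E$ is already a trivial cofibration in $\cC$ (a cofibration by the pushout--product axiom, a weak equivalence by definition of $E$-equivalence), so $F(f\wedge\id_E)\cong F(f)\wedge\id_{F(E)}$ is a trivial cofibration in $\cD$ and hence $F(f)$ is one in $L_{F(E)}\cD$. No Ken Brown is needed.

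For the equivalence part you chose the criterion ``$R$ reflects and the derived unit is an equivalence'', while the paper uses the dual one, ``$F$ reflects and the derived counit is an equivalence'' (Hovey, Corollary~1.3.16, part~2). The choice matters. Left Bousfield localization leaves cofibrant replacement untouched but changes fibrant replacement, and every $F(E)$-fibrant object is already fibrant in $\cD$. Consequently, for $Y$ fibrant in $L_{F(E)}\cD$ the derived counit $F(QRY)\to Y$ is \emph{literally the same map} as in the original adjunction, hence a weak equivalence in $\cD$ and a fortiori an $F(E)$-equivalence. Combined with the easy check that $F$ reflects $F(E)$-equivalences between cofibrants (via $F(f\wedge E)\cong F(f)\wedge F(E)$ and the original reflection property), the proof is two lines.

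Your route through the unit runs into precisely the obstacle you flag as ``bookkeeping'': the fibrant replacement $\widetilde{F(X)}$ in $L_{F(E)}\cD$ is not the fibrant replacement in $\cD$, so the localized derived unit is not the original one. The step ``applying $R$ [to $F(X)\to\widetilde{F(X)}$] gives an $E$-equivalence in $\cC$ by the same argument'' is not justified: the adjunction isomorphism you used shows $R$ preserves locality of \emph{objects}, not that $R$ sends $F(E)$-equivalences with \emph{non-local} source to $E$-equivalences, and $F(X)$ (or its $\cD$-fibrant replacement) need not be $F(E)$-local. This gap can be filled---for instance by showing $\mathbb{R}R$ carries $F(E)$-equivalences to $E$-equivalences via $\mathbb{L}F(E\wedge\mathbb{R}R(g))\simeq F(E)\wedge g$---but that already uses that $\mathbb{L}F$ is an equivalence and amounts to the counit argument in disguise. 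You also never explicitly verify that $R$ reflects weak equivalences between $F(E)$-fibrant objects, though that part is routine once you know $R$ sends $F(E)$-fibrant to $E$-local.
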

\begin{proof}Since the localization didn't change the cofibrations, the left adjoint $F$ still preserves them. To show that it also preserves acyclic cofibrations, take an acyclic cofibration ${f: X\lra Y}$ in $L_E\cC$. By definition $f\wedge Id_E$ is an acyclic cofibration in $\cC$. Since $F$ was a left Quillen functor before localization $F(f\wedge Id_E)$ is an acyclic cofibration in $\cD$. As $F$ was strong monoidal we have $F(f\wedge Id_E)\cong F(f)\wedge Id_{F(E)}$, so $F(f)$ is an acyclic cofibration in $L_{F(E)}\cD$ which finishes the proof of the first part.
\bigskip 

To prove the second part of the statement we use Part 2 from \cite[Corollary 1.3.16]{Hovey}.
Since $F$ is strong monoidal and the original adjunction was a Quillen equivalence $F$ reflects $F(E)$--equivalences between cofibrant objects. It remains to check that the derived counit is an $F(E)$--equivalence. $F(E)$--fibrant objects are fibrant in $\cD$ and the cofibrant replacement functor remains unchanged by localization. Thus this follows from the fact that $F,R$ was a Quillen equivalence.
\end{proof}

We will use this result in several cases for the following two adjoint pairs of $G$ orthogonal spectra. Notice that since both left adjoints are strong monoidal, the results below follow from Lemma \ref{locAdjAtObject}.
\begin{cor}Let $i : N \lra G$ denote the inclusion of a subgroup and let $E$ be a cofibrant object in $G-\mathrm{Sp}^\cO$. Then
\[
\xymatrix{
i^\ast \ :\ L_{E}(G-\mathrm{Sp}^\cO) \  \ar@<+1ex>[r] & \  L_{i^\ast(E)}(N- \mathrm{Sp}^\cO)\ : F_N(G_+,-) \ar@<+0.5ex>[l]
}
\] 
is a strong monoidal Quillen pair.
\end{cor}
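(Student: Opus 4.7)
The strategy is to invoke Lemma \ref{locAdjAtObject} directly; the only real work is to confirm that its hypotheses hold for the restriction--coinduction adjunction in this setting. First I would recall that the pair $(i^\ast, F_N(G_+,-))$ is a Quillen adjunction with respect to the stable model structures on each side, as recorded in \cite[Section V.2]{MandellMay}. Next I would check that restriction along the subgroup inclusion is strong symmetric monoidal: both the smash product and the sphere spectrum on each side are built levelwise from the underlying based-space structure, and the $N$--action on $i^\ast X$ is simply the restriction of the $G$--action, so $i^\ast$ preserves smash products and the unit strictly (not merely up to coherent isomorphism arising from some additional structure).

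With these two points in hand, and using that $i^\ast$ is a left Quillen functor to conclude $i^\ast(E)$ is cofibrant in $N-\mathrm{Sp}^\cO$ whenever $E$ is cofibrant in $G-\mathrm{Sp}^\cO$, the target localization $L_{i^\ast(E)}(N-\mathrm{Sp}^\cO)$ exists by the Bousfield localization theorem recalled just before Lemma \ref{locAdjAtObject}. The corollary is then a direct specialization of that lemma to the case $F = i^\ast$, $R = F_N(G_+,-)$.

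There is essentially no obstacle to this corollary; its content is entirely absorbed by the preceding lemma. The genuinely interesting step, which I would expect to come next in the paper, is to determine when this localized pair actually becomes a Quillen \emph{equivalence} after choosing $E$ to be the idempotent sphere $e_{(H)_G} S_\bQ$. This is where I anticipate the $H$--good versus $H$--bad distinction from Definition \ref{goodSub} will intervene: the relation between $i^\ast(e_{(H)_G})$ and $e_{(H)_N}$ controls whether $i^\ast(e_{(H)_G} S_\bQ)$ has the right local objects, and hence whether the restriction--coinduction pair (as opposed to induction--restriction) transports cleanly through the idempotent splitting.
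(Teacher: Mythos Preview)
Your proposal is correct and matches the paper's approach exactly: the paper's proof is a single sentence observing that the corollary follows from Lemma~\ref{locAdjAtObject} because the left adjoint $i^\ast$ is strong monoidal. Your write-up simply makes explicit the hypotheses the paper leaves implicit (the unlocalized adjunction being Quillen via \cite[Section V.2]{MandellMay}, strong monoidality of $i^\ast$, and cofibrancy of $i^\ast(E)$), and your closing paragraph correctly anticipates the subsequent development.
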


\begin{cor}Let $\epsilon : N \lra W$ denote the projection of groups, where $H$ is normal in $N$ and $W=N/H$. Let $E$ be a cofibrant object in $W-\mathrm{Sp}^\cO$. Then
\[
\xymatrix{
\epsilon^\ast \ :\ L_{E}(W-\mathrm{Sp}^\cO) \  \ar@<+1ex>[r] & \  L_{\epsilon^\ast(E)}(N- \mathrm{Sp}^\cO)\ : (-)^H \ar@<+0.5ex>[l]
}
\] 
is a strong monoidal Quillen pair.
\end{cor}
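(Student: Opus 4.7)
The plan is to apply Lemma \ref{locAdjAtObject} directly to the inflation--fixed-point adjunction
\[
\epsilon^\ast \ :\ W\text{-}\mathrm{Sp}^\cO \ \rightleftarrows\ N\text{-}\mathrm{Sp}^\cO\ :\ (-)^H,
\]
so the task reduces to verifying the three hypotheses of that lemma: that this is a Quillen pair on the stable model structures, that the left adjoint $\epsilon^\ast$ is strong symmetric monoidal, and that the relevant left Bousfield localizations exist.

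First I would cite the standard result from Chapter V of \cite{MandellMay} to see that inflation along the projection $\epsilon: N \twoheadrightarrow W = N/H$ is left adjoint to $H$--fixed points at the pointset level, and that $(\epsilon^\ast, (-)^H)$ is a Quillen pair for the stable model structures. Next I would observe that $\epsilon^\ast$ is strong symmetric monoidal: inflation is the identity on underlying orthogonal spectra, only the group action is altered by precomposing with $\epsilon$, and this clearly commutes with the smash product and sends the sphere spectrum to the sphere spectrum.

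For the existence of the two localizations, $L_E(W\text{-}\mathrm{Sp}^\cO)$ exists by the left Bousfield localization theorem recalled in Section \ref{section:spectra} applied to the cofibrant object $E$. Since $\epsilon^\ast$ is a left Quillen functor, $\epsilon^\ast(E)$ is cofibrant in $N\text{-}\mathrm{Sp}^\cO$, so the same theorem produces $L_{\epsilon^\ast(E)}(N\text{-}\mathrm{Sp}^\cO)$. All hypotheses of Lemma \ref{locAdjAtObject} are then met, and the conclusion is immediate.

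The only non-routine point, and the step I would double-check most carefully, is the strong monoidality of the inflation functor, since everything else is essentially bookkeeping. Once this is confirmed, Lemma \ref{locAdjAtObject} does all the work, exactly as in the preceding corollary about restriction; indeed the author's comment before the statement (\emph{``since both left adjoints are strong monoidal, the results below follow from Lemma \ref{locAdjAtObject}''}) indicates that no additional argument is expected.
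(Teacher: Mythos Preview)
Your proposal is correct and matches the paper's own approach exactly: the paper states just before the corollary that ``since both left adjoints are strong monoidal, the results below follow from Lemma \ref{locAdjAtObject}'', and your verification of the hypotheses (Quillen pair from \cite{MandellMay}, strong monoidality of $\epsilon^\ast$, existence of the localizations) is precisely what is needed.
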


The following two results describe the behaviour of the restriction--induction adjunction at the level of localized categories.

\begin{prop}\label{rightAdjoint1} Suppose $H$ is an exceptional subgroup of $G$ which is $N=N_GH$-good in $G$. Then
\[
\xymatrix{
i^\ast\ :\ L_{e_{(H)_G}S_\bQ}(G-\mathrm{Sp}^\cO)\ \ar@<-1ex>[r] & L_{e_{(H)_N}S_\bQ}(N-\mathrm{Sp}^\cO)\ :\ G_+\wedge_N- \ar@<-0.5ex>[l]
}
\]
is a Quillen adjunction.
\end{prop}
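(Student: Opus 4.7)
The plan is to apply the standard criterion that a Quillen adjunction $(F,U)\colon \C \rightleftarrows \D$ descends to a Quillen adjunction on left Bousfield localisations $L_S\C \rightleftarrows L_T\D$ as soon as the right adjoint $U$ sends $T$-local fibrant objects to $S$-local fibrant objects. Combined with the fact that cofibrations are unchanged by left Bousfield localisation, this gives preservation of trivial cofibrations by the left adjoint via the usual mapping-space adjunction argument. Thus the proposition reduces to showing that $i^*$ carries $(e_{(H)_G}S_\bQ)$-local fibrant $G$-spectra to $(e_{(H)_N}S_\bQ)$-local fibrant $N$-spectra. Stable fibrancy of $i^*Z$ is automatic, since $(G_+\wedge_N -, i^*)$ is already a Quillen pair on the unlocalised categories.

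For the preservation of local objects I would exploit that $i^*$ is strong symmetric monoidal. A stably fibrant $Z$ is $(e_{(H)_G}S_\bQ)$-local precisely when the canonical map $Z \to Z \wedge e_{(H)_G}S_\bQ$ is a weak equivalence in $G-\mathrm{Sp}^\cO$. Applying $i^*$ and commuting it past the smash product yields
\[
i^*Z \;\simeq\; i^*(Z \wedge e_{(H)_G}S_\bQ) \;\cong\; i^*Z \wedge i^*(e_{(H)_G}S_\bQ).
\]
The $N$-good hypothesis gives $i^*(e_{(H)_G}) = e_{(H)_N}$ in $\A(N)_\bQ$, and since $i^*$ sends the rational sphere spectrum to itself, this identifies the factor $i^*(e_{(H)_G}S_\bQ)$ with $e_{(H)_N}S_\bQ$. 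The resulting equivalence $i^*Z \simeq i^*Z \wedge e_{(H)_N}S_\bQ$ exhibits $i^*Z$ as $(e_{(H)_N}S_\bQ)$-local, as required.

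The subtlest point I expect is transferring the Burnside-ring identity $i^*(e_{(H)_G}) = e_{(H)_N}$ furnished by the $N$-good assumption into a genuine equivalence of spectra $i^*(e_{(H)_G}S_\bQ) \simeq e_{(H)_N}S_\bQ$; this is precisely the interaction between strong symmetric monoidality of $i^*$ and the hypothesis $H$ is $N$-good, and it is the place where the hypothesis is actually used. Everything else — the descent criterion, preservation of stable fibrations by $i^*$, and cofibrations being unchanged by localisation — is formal.
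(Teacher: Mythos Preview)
Your argument is correct, but it takes the dual route to the paper's. The paper verifies directly that the \emph{left} adjoint $G_+\wedge_N-$ preserves acyclic cofibrations: given an $e_{(H)_N}S_\bQ$-acyclic cofibration $f$ of $N$-spectra, the projection formula
\[
(G_+\wedge_N X)\wedge e_{(H)_G}S_\bQ \;\cong\; G_+\wedge_N\bigl(X\wedge i^*(e_{(H)_G}S_\bQ)\bigr)
\]
together with $i^*(e_{(H)_G})=e_{(H)_N}$ shows that $G_+\wedge_N f$ is an $e_{(H)_G}S_\bQ$-equivalence. You instead check that the \emph{right} adjoint $i^*$ preserves local fibrant objects, using strong monoidality of $i^*$ and the characterisation of locality as $Z\simeq Z\wedge e S_\bQ$. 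Both arguments pivot on exactly the same Burnside-ring identity furnished by $N$-goodness, so the genuine content is identical.

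The trade-off: the paper's approach works straight from the definition of $E$-equivalence and needs only the projection formula. Your approach is arguably more conceptual, but it depends on knowing that locality is detected by the map $Z\to Z\wedge e_{(H)_G}S_\bQ$; this holds here because the localisation is smashing with $L_ES\simeq E$ (the paper notes all its localisations are smashing), but it is an extra ingredient you are silently invoking. If you keep your version, it would be worth making that reliance explicit.
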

\begin{proof} This was a Quillen adjunction before localization by \cite[Chapter V, Proposition 2.3]{MandellMay} so the left adjoint preserves cofibrations. It preserves acyclic cofibrations as $G_+\wedge_N-$ preserved acyclic cofibrations before localization and we have a natural (in an $N$--spectrum $X$) isomorphism (see \cite[Chapter V, Proposition 2.3]{MandellMay}):
$$(G_+\wedge_N X) \wedge e_{(H)_G}S_\bQ \cong G_+ \wedge_N (X \wedge i^*(e_{(H)_G}S_\bQ))$$
Note that, since $H$ is $N$-good in $G$,  $i^*(e_{(H)_G})\cong e_{(H)_N}$, where the latter is the idempotent corresponding to $(H)_N$ in $\A(N)_\bQ$.

\end{proof}

\begin{prop}\label{notQErest_ind}Suppose $H$ is an exceptional subgroup of $G$ which is $N=N_GH$--bad in $G$. Then
\[
\xymatrix{
i^\ast\ :\ L_{e_{(H)_G}S_\bQ}(G-\mathrm{Sp}^\cO)\ \ar@<-1ex>[r] & L_{e_{(H)_N}S_\bQ}(N-\mathrm{Sp}^\cO)\ :\ G_+\wedge_N- \ar@<-0.5ex>[l]
}
\]
is not a Quillen adjunction.
\end{prop}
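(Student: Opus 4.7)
The plan is to exhibit an explicit acyclic cofibration in $L_{e_{(H)_N}S_\bQ}(N-\mathrm{Sp}^\cO)$ whose image under the left adjoint $G_+\wedge_N-$ is not a weak equivalence in $L_{e_{(H)_G}S_\bQ}(G-\mathrm{Sp}^\cO)$. The construction is driven by the obstruction to $N$-goodness. Since $H$ is $N$-bad, $i^*(e_{(H)_G})\neq e_{(H)_N}$ in $\A(N)_\bQ$. The $N$-conjugacy class $(H)_N$ is contained in the preimage of $(H)_G$ under the continuous map $\cF(N)/N\to\cF(G)/G$, so $e_{(H)_N}\cdot i^*(e_{(H)_G})=e_{(H)_N}$, and hence
$$f := i^*(e_{(H)_G})-e_{(H)_N}$$
is a nonzero idempotent of $\A(N)_\bQ$, orthogonal to $e_{(H)_N}$ and satisfying $f\cdot i^*(e_{(H)_G})=f$. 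Because $f\neq 0$, its support in $\cF(N)/N$ contains some $N$-conjugacy class $(H')_N$ with $H'\sim_G H$ but $(H')_N\neq (H)_N$.

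I would then take a cofibrant model $Y$ for $\Sigma^\infty(N/H'_+)\wedge fS_\bQ$ and consider the map $0\to Y$ in $N$-spectra. Since $f\cdot e_{(H)_N}=0$, we have $Y\wedge e_{(H)_N}S_\bQ\simeq 0$, so $0\to Y$ is an acyclic cofibration in $L_{e_{(H)_N}S_\bQ}(N-\mathrm{Sp}^\cO)$. Applying $G_+\wedge_N-$ and using the natural isomorphism
$$(G_+\wedge_N X)\wedge e_{(H)_G}S_\bQ \;\cong\; G_+\wedge_N\bigl(X\wedge i^*(e_{(H)_G})S_\bQ\bigr)$$
from the proof of Proposition \ref{rightAdjoint1}, together with $f\cdot i^*(e_{(H)_G})=f$, produces the identification
$$(G_+\wedge_N Y)\wedge e_{(H)_G}S_\bQ \;\cong\; G_+\wedge_N Y.$$
It therefore suffices to verify that $G_+\wedge_N Y$ is not stably contractible.

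The main obstacle is this last nontriviality check, which is really the whole content: one must confirm that the idempotent $f$ produces genuine $H'$-equivariant homotopy that survives induction. I would compute $\pi_0^{H'}(G_+\wedge_N Y)\otimes\bQ$ using that the underlying $G$-spectrum of $G_+\wedge_N\Sigma^\infty(N/H'_+)$ is $\Sigma^\infty(G/H'_+)$, whose $H'$-fixed homotopy is nontrivial (the coset $eH'\in(G/H')^{H'}$ contributes a $\bQ$-summand rationally), and that $f$, being in the $(H')_N$-support by choice, preserves this summand rather than annihilating it. Once this is confirmed, the cofibrant source $Y$ provides an acyclic cofibration in $L_{e_{(H)_N}S_\bQ}(N-\mathrm{Sp}^\cO)$ whose image under $G_+\wedge_N-$ is not an $e_{(H)_G}S_\bQ$-equivalence, so the purported Quillen adjunction fails. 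A stylistically similar route, should this computation become unwieldy, is to show dually that the right adjoint $i^*$ does not send the fibrant object $e_{(H)_G}S_\bQ$ to an $e_{(H)_N}S_\bQ$-local $N$-spectrum, since $\pi^{H'}_*\bigl(i^*(e_{(H)_G}S_\bQ)\bigr)\otimes\bQ$ is nontrivial while $(H')_N$ is outside the support of $e_{(H)_N}$.
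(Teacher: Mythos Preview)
Your approach is correct and shares the paper's core idea: exploit a subgroup $H'$ with $(H')_G=(H)_G$ but $(H')_N\neq(H)_N$ to build an $e_{(H)_N}$-acyclic $N$-spectrum whose induction is not $e_{(H)_G}$-acyclic. The difference is packaging, and yours is more elaborate than necessary.

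The paper simply takes the inclusion $N/H_+\hookrightarrow N/H_+\vee N/H'_+$. Since $H$ and $H'$ are $G$-conjugate but not $N$-conjugate, $(N/H')^H=\emptyset$, so $\Phi^H(N/H'_+)=\ast$ and the inclusion is an $e_{(H)_N}$-equivalence (and a cofibration). Inducing gives $G/H_+\hookrightarrow G/H_+\vee G/H'_+$; now $(G/H')^H\neq\emptyset$ because $(H')_G=(H)_G$, so $\Phi^H(G/H'_+)\neq\ast$ and the map is not an $e_{(H)_G}$-equivalence. That is the entire argument.

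Your smash with $fS_\bQ$ is a detour. The spectrum $N/H'_+$ is \emph{already} $e_{(H)_N}$-acyclic by the observation above, so there is no need to project further by $f$; in fact $N/H'_+\wedge e_{(H)_N}S_\bQ\simeq\ast$ forces $Y\simeq N/H'_+\wedge i^*(e_{(H)_G})S_\bQ$, and only then does the projection formula give $G_+\wedge_N Y\simeq G/H'_+\wedge e_{(H)_G}S_\bQ$, reducing to the paper's computation anyway. Your sketch of the nontriviality check (``$f$ preserves this summand'') implicitly relies on this collapse; without it, $fS_\bQ$ is not restricted from $G$, so you cannot pull it across $G_+\wedge_N-$ as written. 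Dropping the $\wedge fS_\bQ$ and taking $Y$ to be a cofibrant model for $N/H'_+$ makes your map $0\to Y$ essentially identical to the paper's construction, with $G_+\wedge_N Y\simeq G/H'_+$ and the check immediate.

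Your closing dual remark---that $i^*$ fails to send the local object $e_{(H)_G}S_\bQ$ to an $e_{(H)_N}$-local object because $(1-e_{(H)_N})\cdot i^*(e_{(H)_G})=f\neq 0$---is a valid and rather clean alternative.
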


\begin{proof}It is enough to show that $G_+\wedge_N-$ does not preserve acyclic cofibrations.
Firstly, since $H$ is $N$--bad in $G$ there exists $H'$ such that $(H)_G=(H')_G$ and $(H)_N\neq (H')_N$.  

Take a map $f$ to be the inclusion into the coproduct $N/H_+ \lra N/H_+ \vee N/H'_+$. This is a weak equivalence in $L_{e_{(H)_N}S_\bQ}(N-\mathrm{Sp}^\cO)$ since $\Phi^H(N/H_+)=\Phi^H(N/H_+ \vee N/H'_+)$. It is also a cofibration as a pushout of a cofibration $\ast \lra N/H_+$ along the map $\ast \lra N/H'_+$. Applying the left adjoint gives the inclusion $G_+\wedge_Nf: G/H_+ \lra G/H_+\vee G/H'_+$. Now $\Phi^H(G/H_+\vee G/H'_+)=N/H_+\vee N/H'_+ \neq N/H_+$ since $H$ is $N$--bad by assumption and $(H)_G=(H')_G$.
\end{proof}

It turns out that the restriction and function spectrum adjunction gives a Quillen adjunction under general conditions. 

\begin{lem}\label{QuillenAdjForIandH} Suppose $G$ is any compact Lie group, $i: N \lra G$ is an inclusion of a subgroup and $V$ is an open and closed $G$-invariant set $V$ in $\mathrm{Sub_f}(G)$ which is a union of $\sim$-equivalence classes (see Section \ref{section:subgroups}). Then the adjunction
\[
\xymatrix{
i^\ast\ :\ L_{e_VS_\bQ}(G-\mathrm{Sp}^\cO)\ \ar@<+1ex>[r] & L_{e_{i^\ast V}S_\bQ}(N-\mathrm{Sp}^\cO)\ :\ F_N(G_+,-) \ar@<+0.5ex>[l]
}
\]
is a Quillen pair. 
\end{lem}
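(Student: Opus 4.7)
The plan is to reduce the statement to a direct application of Lemma \ref{locAdjAtObject}. Before any localization, the pair $(i^*, F_N(G_+, -))$ is a Quillen adjunction by \cite[Chapter V, Proposition 2.3]{MandellMay}, and the left adjoint $i^*$ is strong symmetric monoidal since restriction preserves the smash product of $G$--spectra. Taking a cofibrant replacement $\tilde E \to e_V S_\bQ$ in $G\text{-}\mathrm{Sp}^\cO$, Lemma \ref{locAdjAtObject} yields a strong monoidal Quillen adjunction
\[
i^* : L_{\tilde E}(G\text{-}\mathrm{Sp}^\cO) \rightleftarrows L_{i^*(\tilde E)}(N\text{-}\mathrm{Sp}^\cO) : F_N(G_+, -).
\]
The source is $L_{e_V S_\bQ}(G\text{-}\mathrm{Sp}^\cO)$ by definition, so the task reduces to identifying the target model structure with $L_{e_{i^*V} S_\bQ}(N\text{-}\mathrm{Sp}^\cO)$.

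Since left Bousfield localizations at stably equivalent cofibrant objects coincide, it suffices to show $i^*(e_V S_\bQ) \simeq e_{i^*V} S_\bQ$ in $N\text{-}\mathrm{Sp}^\cO$. Strong monoidality of $i^*$ and the identity $i^* S_\bQ \simeq S_\bQ$ reduce this further to the equality of idempotents $i^*(e_V) = e_{i^*V}$ in $\A(N)_\bQ = \pi_0^N(S_\bQ)$. Under the tom Dieck identification $\A(G)_\bQ \cong C(\cF(G)/G, \bQ)$ and its analogue for $N$, the ring homomorphism $i^*: \A(G)_\bQ \to \A(N)_\bQ$ corresponds to pullback of locally constant functions along the continuous map $\cF(N)/N \to \cF(G)/G$ induced by the inclusion $N \hookrightarrow G$. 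The characteristic function of $V$ therefore pulls back to the characteristic function of $i^*V$. Because $V$ is assumed to be $G$--invariant, open-closed and a union of $\sim$--equivalence classes in $\mathrm{Sub}_f(G)$, its preimage $i^*V$ inherits the corresponding properties in $\mathrm{Sub}_f(N)$, so the pulled-back function is a genuine idempotent, namely $e_{i^*V}$.

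The main obstacle is the compatibility $i^*(e_V) = e_{i^*V}$; everything else is a formal consequence of Lemma \ref{locAdjAtObject}. This compatibility is essentially the naturality of tom Dieck's description of the rational Burnside ring with respect to subgroup inclusions, and while one must take a little care with the $f$--topology on the spaces of subgroups, there is no genuine difficulty. Notably, no hypothesis that $V$ corresponds to a single conjugacy class or that subgroups involved are exceptional, good, or bad is needed — this is exactly why the restriction-coinduction adjunction, unlike the induction-restriction pair treated in Proposition \ref{notQErest_ind}, descends to the localized categories in full generality: the necessary monoidality sits on the correct (left-adjoint) side.
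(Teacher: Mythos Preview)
Your proof is correct and follows the same route as the paper: apply Lemma \ref{locAdjAtObject} using that $i^\ast$ is strong symmetric monoidal, then identify the target localization via $i^\ast(e_V)=e_{i^\ast V}$ (your discussion of this identification via tom Dieck's description is more explicit than the paper's, which simply records the notational convention). One small correction: the unlocalized Quillen pair $(i^\ast, F_N(G_+,-))$ is \cite[Chapter V, Proposition 2.4]{MandellMay}, not 2.3 --- the latter handles the induction--restriction adjunction.
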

\begin{proof}Before localizations this was a Quillen pair by \cite[Chapter V, Proposition 2.4]{MandellMay}. It is a Quillen pair after localization by Lemma \ref{locAdjAtObject}, and the fact that $i^\ast$ is strong symmetric monoidal. We use the notation $i^\ast V$ for the preimage of $V$ under the inclusion on spaces of subgroups induced by $i$, i.e. $\mathrm{Sub_f}(N) \lra \mathrm{Sub_f}(G) $, see Section \ref{section:subgroups}.
\end{proof}

We will repeatedly use the lemma above, mainly in situations where after further localization of the right hand side we will get a Quillen equivalence. 

\begin{cor}\label{localizedQAdjunctions}
 Suppose $G$ is a compact Lie group and $H$ is an exceptional subgroup of $G$.  Then
\[
\xymatrix{
i^\ast\ :\ L_{e_{(H)_G}S_\bQ}(G-\mathrm{Sp}^\cO)\ \ar@<+1ex>[r] & L_{e_{(H)_N}S_\bQ}(N-\mathrm{Sp}^\cO)\ :\ F_N(G_+,-) \ar@<+0.5ex>[l]
}
\]
is a Quillen adjunction.

\end{cor}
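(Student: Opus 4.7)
The plan is to deduce this corollary from Lemma \ref{QuillenAdjForIandH} by taking $V$ to be the conjugacy class $(H)_G$ inside $\mathrm{Sub}_f(G)$, and then performing a further left Bousfield localization on the $N$-equivariant side. First I would verify that $V=(H)_G$ satisfies the hypotheses of Lemma \ref{QuillenAdjForIandH}. The set is manifestly $G$-invariant, and since $H$ is exceptional the idempotent $e_{(H)_G}\in\A(G)_\bQ$ exists, so $(H)_G$ is open and closed in $\mathrm{Sub}_f(G)$. Moreover, the finiteness of $W_GH$ combined with the absence of cotoral subgroups in $H$ (and hence in every $G$-conjugate $gHg^{-1}$) forces the $\sim$-equivalence class of each such conjugate to reduce to a singleton: any $L\geq gHg^{-1}$ with $L/gHg^{-1}$ a torus lies in $N_G(gHg^{-1})$, whose Weyl group is finite, so $L=gHg^{-1}$; and there is no proper cotoral inclusion below $gHg^{-1}$ by hypothesis. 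Thus $(H)_G$ is a union of $\sim$-equivalence classes.

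With this choice, Lemma \ref{QuillenAdjForIandH} produces a Quillen adjunction
\[
i^\ast\ :\ L_{e_{(H)_G}S_\bQ}(G-\mathrm{Sp}^\cO)\ \rightleftarrows\ L_{e_{i^\ast(H)_G}S_\bQ}(N-\mathrm{Sp}^\cO)\ :\ F_N(G_+,-),
\]
where $i^\ast(H)_G\subseteq\mathrm{Sub}_f(N)$ is the set of closed subgroups of $N$ that become conjugate to $H$ inside $G$. The next step is to compare the two relevant idempotents. The preimage $i^\ast(H)_G$ is a disjoint union of $N$-conjugacy classes, one of which is $(H)_N$, so as subspaces of $\mathrm{Sub}_f(N)$ one has $(H)_N\subseteq i^\ast(H)_G$. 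Translating this inclusion to characteristic functions yields $e_{(H)_N}\cdot e_{i^\ast(H)_G}=e_{(H)_N}$ in $\A(N)_\bQ$, which means that $L_{e_{(H)_N}S_\bQ}(N-\mathrm{Sp}^\cO)$ is obtained from $L_{e_{i^\ast(H)_G}S_\bQ}(N-\mathrm{Sp}^\cO)$ by a further left Bousfield localization.

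To close the argument I would invoke the general fact that a further left Bousfield localization on the target of a left Quillen functor preserves the Quillen adjunction: the cofibrations are unchanged under Bousfield localization, while the class of acyclic cofibrations on the target only enlarges, so $i^\ast$ still sends cofibrations to cofibrations and acyclic cofibrations to acyclic cofibrations when regarded as a functor into $L_{e_{(H)_N}S_\bQ}(N-\mathrm{Sp}^\cO)$. The modest but only substantive point in the proof is the idempotent identity $e_{(H)_N}\cdot e_{i^\ast(H)_G}=e_{(H)_N}$; once the inclusion $(H)_N\subseteq i^\ast(H)_G$ of subspaces of $\mathrm{Sub}_f(N)$ is unpacked, everything else is formal.
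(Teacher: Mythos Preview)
Your proof is correct and follows essentially the same approach as the paper: apply Lemma \ref{QuillenAdjForIandH} with $V=(H)_G$, then further localize on the $N$-side using the identity $e_{(H)_N}\cdot i^*(e_{(H)_G})=e_{(H)_N}$. The only differences are cosmetic: you treat the $N$-good and $N$-bad cases uniformly rather than splitting, and you spell out why $(H)_G$ is a union of $\sim$-equivalence classes, which the paper leaves implicit in the definition of exceptional.
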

\begin{proof} For $H$ which is $N=N_GH$--good the result follows from the fact that the idempotent on the right hand side $e_{(H)_N}=i^*(e_{(H)_G})=e_{i^*((H)_G)}$. For $H$ which is $N=N_GH$--bad it is true since the left hand side is a further localization of $L_{e_{i^*((H)_G)}S_\bQ}(N-\mathrm{Sp}^\cO)$ at the idempotent $e_{(H)_N}$:
\[
\xymatrix@C=3pc{
L_{e_{(H)_G}S_\bQ}(G-\mathrm{Sp}^\cO)\
\ar@<+1ex>[r]^{i^\ast}
&
\ L_{i^*(e_{(H)_G}) S_{\bQ}}(N- \mathrm{Sp}^\cO)\
\ar@<+0.5ex>[l]^{F_{N}(G_+,-)}
\ar@<+1ex>[r]^(.53){\mathrm{Id}}
&
\ L_{e_{(H)_N}S_\bQ}(N-\mathrm{Sp}^\cO)
\ar@<+0.5ex>[l]^(.47){\mathrm{Id}}
}
\]
Note that since $H$ is $N$--bad, $e_{(H)_N} \neq i^*(e_{(H)_G})$ and $e_{(H)_N} i^*(e_{(H)_G})=e_{(H)_N}$.

\end{proof}

In the next two theorems we show that the Quillen adjunction above is in fact a Quillen equivalence.

\begin{thm}\label{theorem111} Suppose $N=N_GH$ and $H$ is an exceptional subgroup of $G$ that is $N$--good. Then the adjunction
\[
\xymatrix{
i^\ast \ :\ L_{e_{(H)_G}S_{\bQ}}(G-\mathrm{Sp}^\cO) \  \ar@<+1ex>[r] & \  L_{e_{(H)_N} S_{\bQ}}(N- \mathrm{Sp}^\cO)\ : F_N(G_+,-) \ar@<+0.5ex>[l]
}
\]
is a strong symmetric monoidal Quillen equivalence. 
\end{thm}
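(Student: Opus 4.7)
The plan is to apply Lemma \ref{qequiv_generators} to the Quillen adjunction supplied by Corollary \ref{localizedQAdjunctions}. First I would note that both sides are stable model categories and, because the Bousfield localizations in play are smashing, they inherit sets of homotopically compact generators from the orbit spectra $G/K_+$ (resp.\ $N/K_+$) smashed with the rational sphere and the idempotent. Since $H$ is exceptional, $e_{(H)_G}$ is the characteristic function of the isolated conjugacy class $(H)_G$ in $\mathcal{F}(G)/G$, so the only generators not $H$-equivalent to $\ast$ are (suspensions of) $e_{(H)_G} S_\bQ \wedge G/H_+$, and analogously on the $N$-side with $e_{(H)_N} S_\bQ \wedge N/H_+$. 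Because $H$ is $N$-good we have $i^*(e_{(H)_G}) = e_{(H)_N}$, and the strong symmetric monoidal left adjoint $i^*$ carries the left generator to the right generator.

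Next I would invoke the double coset decomposition
\[
i^*(G/H_+) \;\simeq\; \bigvee_{g \in N \backslash G/H} N/(N \cap {}^gH)_+ .
\]
For $g \in N$ this contributes the summand $N/H_+$. For $g \notin N$, the subgroup $N \cap {}^gH$ cannot be $N$-conjugate to $H$: such a conjugacy would imply $H \subseteq {}^gH$, and since these are compact Lie groups of the same dimension we would get $H = {}^gH$, forcing $g \in N_GH = N$, a contradiction. Hence $(N \cap {}^gH)_N$ lies outside the support of $e_{(H)_N}$ and the corresponding wedge summand is killed by $e_{(H)_N}$-localization. I would then apply the Wirthm\"uller equivalence $F_N(G_+,-) \simeq G_+ \wedge_N (S^{-L(N,G)} \wedge -)$, which exhibits the right adjoint as a shift of a left adjoint and hence preserves coproducts (verifying the first hypothesis of Lemma \ref{qequiv_generators}); it also identifies both the derived unit $G/H_+ \to F_N(G_+, i^*(G/H_+))$ and the counit $i^* F_N(G_+, N/H_+) \to N/H_+$, after discarding the non-identity double coset summands, with the expected equivalences on generators.

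The hard part will be tracking the Wirthm\"uller twist by the representation sphere $S^{L(N,G)}$ through the $e_{(H)_G}$-localization, and verifying that on the $(H)_G$-part this twist is rationally invertible in a way that matches the surviving identity-coset summand with $G/H_+$ on the nose. Once this bookkeeping is complete, all three hypotheses of Lemma \ref{qequiv_generators} will hold on the chosen generators, and since $i^*$ is strong symmetric monoidal to begin with, the resulting Quillen equivalence will be symmetric monoidal.
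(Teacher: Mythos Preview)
Your strategy is sound, and your double--coset analysis of $i^*(G/H_+)$ is precisely the geometric fact underlying the paper's proof as well: the paper phrases it as ``the map $N/H_+ \to i^*(G/H_+)$ induced by $N\hookrightarrow G$ is an equivalence in $L_{e_{(H)_N}S_\bQ}(N-\mathrm{Sp}^\cO)$'' and uses it without spelling out the cosets. Where you diverge from the paper is in how you finish, and the paper's route sidesteps your ``hard part'' entirely.

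The paper never invokes the Wirthm\"uller isomorphism. Instead it applies Part~2 of Hovey's Corollary~1.3.16: first $i^*$ reflects $H$--equivalences (immediate from strong monoidality and $[N/H_+, i^*X]^N \cong [G/H_+,X]^G$), so it remains to show the derived counit is an $H$--equivalence on every fibrant $Y$. For this the paper computes that domain and codomain of the counit have abstractly isomorphic $\pi_*^H$ (this step uses exactly your double--coset fact), reduces via Lemma~\ref{qequiv_generators} to the single generator $i^*(\hat f G/H_+)$, whose $\pi_*^H$ is $\bQ[W_GH]$ concentrated in degree~$0$, and then observes that one of the triangle identities for the adjunction exhibits the counit on this object as a retraction. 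Hence the counit is surjective on $\pi_*^H$, and since both sides are finite--dimensional $\bQ$--vector spaces it is an isomorphism. No representation spheres enter.

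Your Wirthm\"uller route can also be completed, and the gap you flag dissolves once you note that the twist is $e_{(H)_N}$--locally trivial. Indeed $L = L(N,G) \cong \mathfrak g/\mathfrak n$ as an $N$--representation via the adjoint action, and in the $e_{(H)_N}$--local category only $\Phi^H$ is relevant, so one needs $L^H = 0$. But $\mathfrak g^H = \mathrm{Lie}(C_GH)$, and since $C_GH \subseteq N_GH = N$ one has $\mathfrak g^H \subseteq \mathfrak n$; the short exact sequence $0\to\mathfrak n\to\mathfrak g\to L\to 0$ of $H$--representations then gives $L^H = 0$. With this in hand your identification of the unit and counit on generators goes through. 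What your approach buys is an explicit reason why the derived $F_N(G_+,-)$ preserves coproducts (the hypothesis of Lemma~\ref{qequiv_generators}, which the paper leaves implicit); what the paper's approach buys is that it never has to name $L(N,G)$ or prove anything about it.
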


\begin{proof} Firstly, if $H$ is an $N$--good exceptional subgroup of $G$ with an idempotent $e_{(H)_G}$ then $e_{(H)_N}=i^\ast(e_{(H)_G})$ in $\A(N)_\bQ$. 

This is a Quillen adjunction by Corollary \ref{localizedQAdjunctions} and we claim that $i^\ast$ preserves all $H$--equivalences. Suppose $f:X \lra Y$ is an $H$--equivalence in $L_{e_{(H)_G}S_{\bQ}}(G-\mathrm{Sp}^\cO)$, i.e. $Id_{e_{(H)_G}S_\bQ}\wedge f: e_{(H)_G}S_\bQ \wedge X \lra e_{(H)_G}S_\bQ \wedge Y$ is a $\pi_\ast$--isomorphism. As $i^\ast$ is strong monoidal $$i^\ast(Id_{e_{(H)_G}S_\bQ}\wedge f) \cong Id_{i^\ast(e_{(H)_G}S_\bQ)}\wedge i^\ast(f) \cong Id_{e_{(H)_N}S_\bQ}\wedge i^\ast(f)$$ and $i^\ast$ preserves $\pi_\ast$--isomorphisms we can conclude.

To show this is a Quillen equivalence we will use Part 2 from \cite[Corollary 1.3.16]{Hovey}.
It is easy to see that $i^\ast$ reflects $H$--equivalences using the fact it is strong monoidal and the isomorphism $[N/H_+, i^\ast(X)]^N\cong [G/H_+,X]^G$.

As $i^\ast$ preserves all $H$--equivalences it is enough to check that for every fibrant $Y \in L_{e_{(H)_N} S_{\bQ}}(N- \mathrm{Sp}^\cO)$ the counit map $\varepsilon_Y:i^\ast F_N(G_+,Y) \lra Y$ is an $H$--equivalence (in $N$--spectra), i.e it is a $\pi_\ast^H$--isomorphism of $N$--spectra.

First we check that domain and codomain have isomorphic stable $H$ homotopy groups:
\begin{multline}
\pi_\ast^H(i^*F_N(G_+,Y))\cong \pi_\ast^H(F_N(G_+,Y)) \cong [G/H+, F_N(G_+,Y)]_\ast^G \\
\cong [i^\ast(G/H_+),Y]_\ast^N\cong [N/H_+,Y]_\ast^N\cong \pi_\ast^H(Y)
\end{multline}
The next-to-last isomorphism follows from the fact that the map $N/H_+ \lra G/H_+$ (induced by inclusion $N \lra G$) is an $H$--equivalence in $N$--spectra, i.e an equivalence in $L_{e_{(H)_N} S_{\bQ}}(N- \mathrm{Sp}^\cO)$.

By Lemma \ref{qequiv_generators} it is enough to check the counit condition for a generator.   We will check it for the spectrum $i^\ast (\hat{f}G/H_+)$, which is a compact generator for localized $N$--spectra (it is $H$--equivalent to $N/H_+$). The stable $H$--homotopy groups of this generator are $\bQ[W_{G}H]$ in degree $0$ (where $W_{G}H$ is the Weyl group for $H$ in $G$, so in particular $\bQ[W_{G}H]$ is a finite dimensional vector space by assumption that $H$ is exceptional in $G$) and $0$ in other degrees.

Now it is enough to show that $[N/H_+,\varepsilon_{i^\ast (\hat{f}G/H_+)}]^N$ is surjective. One of the triangle identities on $i^\ast (\hat{f}G/H_+)$ for the adjunction requires that the following diagram commutes 

\[
\xymatrix@R=2pc@C=5pc{
i^*(\hat{f}G /H_+) \ar[dr]^{\mathrm{Id}} \ar[d]_{i^\ast(\eta_{\hat{f}G/H_+})} &   \\
  {i^* F_N(G_+, i^*(\hat{f}G /H_+))} \ar[r]_(.57){\varepsilon_{i^\ast (\hat{f}G/H_+)}} & {i^*(\hat{f}G /H_+)} \\ 
}
\]

Thus postcomposition with $\varepsilon_{i^\ast (\hat{f}G/H_+)}$ is surjective on the homotopy level. It follows that the counit map is an $H$--equivalence of $N$--spectra for every fibrant $Y$, which finishes the proof.
\end{proof}

The argument above will not work in the context where $i^\ast$ does not preserve fibrant replacements. However we found the proof above amusing, so we decided to present it, even though the proof below can be applied also in the case where $H$ is an exceptional $N_GH$--good subgroup of $G$.

\begin{thm}\label{badLeftAdjoint} Suppose $H$ is an exceptional subgroup of $G$. Then the composite of adjunctions
\[
\xymatrix@C=3pc{
L_{e_{(H)_G}S_{\bQ}}(G-\mathrm{Sp}^\cO)\
\ar@<+1ex>[r]^{i^\ast_N}
&
\ L_{i^\ast(e_{(H)_G} S_{\bQ})}(N- \mathrm{Sp}^\cO)\
\ar@<+0.5ex>[l]^{F_N(G_+,-)}
\ar@<+1ex>[r]^(.54){\mathrm{Id}}
&
\ L_{e_{(H)_N} S_{\bQ}}(N- \mathrm{Sp}^\cO)
\ar@<+0.5ex>[l]^(.46){\mathrm{Id}}
}
\]
is a strong symmetric monoidal Quillen equivalence, where $e_{(H)_N}$ denotes the idempotent of the rational Burnside ring $\A(N)_\bQ$ corresponding to the characteristic function of $(H)_N$. Notice that if $H$ is $N$--good then the right adjunction is trivial.
\end{thm}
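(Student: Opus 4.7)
My plan is to apply Lemma~\ref{qequiv_generators}. First, the composite is a strong symmetric monoidal Quillen adjunction: the outer pair $(i^{\ast}, F_{N}(G_{+},-))$ is Quillen by Lemma~\ref{QuillenAdjForIandH} applied with $V\subseteq\mathrm{Sub_f}(G)$ the open--and--closed $G$--invariant subspace corresponding to $e_{(H)_{G}}$; the inner identity pair is Quillen because the right-hand localization is a further left Bousfield localization of the middle one, using the relation $e_{(H)_{N}}\cdot i^{\ast}(e_{(H)_{G}})=e_{(H)_{N}}$. Strong symmetric monoidality of the composite left adjoint follows because $i^{\ast}$ is strong symmetric monoidal and the identity functor trivially is.

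Next I identify the homotopically compact generators: after the idempotent localization, only the orbit cell $G/H_{+}$ (respectively $N/H_{+}$) survives as a compact generator up to (de)suspension, since $G/K_{+}$ is killed by $e_{(H)_{G}}$ whenever $(K)_{G}\neq (H)_{G}$. The right derived functor $RF_{N}(G_{+},-)$ preserves homotopy coproducts because $G_{+}$ is a dualizable $N$--spectrum, so by Lemma~\ref{qequiv_generators} it suffices to check the derived unit at $G/H_{+}$ and the derived counit at $N/H_{+}$.

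The heart of the argument is the equivalence
\[
N/H_{+}\ \xrightarrow{\ \simeq\ }\ i^{\ast}(G/H_{+})\quad\text{in}\quad L_{e_{(H)_{N}}S_{\bQ}}(N-\mathrm{Sp}^{\cO}),
\]
induced by the inclusion $N\hookrightarrow G$. By the criterion that weak equivalences between local objects of $L_{e_{(H)_{N}}S_{\bQ}}(N-\mathrm{Sp}^{\cO})$ are detected by $\pi_{\ast}^{H}$, it suffices to show this map is a rational $\pi_{\ast}^{H}$--isomorphism after fibrant replacement. Both sides have rational $H$--fixed stable homotopy groups concentrated in degree zero and equal there to $\bQ[W]$ with $W = W_{G}H = W_{N}H$ finite; the induced map is the identity on these group algebras, as one sees via the Wirthm\"uller identification $[N/H_{+}, i^{\ast}G/H_{+}]^{N}_{\ast}\cong [G/H_{+}, G/H_{+}]^{G}_{\ast}$ together with the standard computation of stable maps between orbits for exceptional subgroups.

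Given this equivalence, the derived counit at $N/H_{+}$ and the derived unit at $G/H_{+}$ follow from routine $\pi_{\ast}^{H}$--computations using the restriction--coinduction adjunction. The main obstacle is the key equivalence above, and specifically the fact that one must pass to the finer localization at $e_{(H)_{N}}$: in the $N$--bad case, $i^{\ast}(G/H_{+})$ carries additional $N$--equivariant pieces (coming from subgroups of $G$--conjugate but not $N$--conjugate to $H$ inside $N$) that are killed only by $e_{(H)_{N}}$, not by $i^{\ast}(e_{(H)_{G}})$. This is exactly why the intermediate Bousfield localization appears in the statement, and why the argument of Theorem~\ref{theorem111} required the $N$--goodness hypothesis.
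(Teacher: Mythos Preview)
Your proof is correct and follows essentially the same route as the paper: both arguments establish the Quillen adjunction via Corollary~\ref{localizedQAdjunctions} and then reduce the Quillen equivalence to the key fact that $N/H_{+}\to i^{\ast}(G/H_{+})$ becomes an equivalence after localizing at $e_{(H)_{N}}$. The only organizational difference is that the paper invokes \cite[Corollary~1.3.16(3)]{Hovey} (showing the right adjoint reflects weak equivalences between fibrant objects and checking the derived unit on the generator), whereas you use Lemma~\ref{qequiv_generators} (checking both derived unit and counit on generators, with the coproduct-preservation hypothesis supplied by dualizability of $G_{+}$); both criteria collapse to the same computation once the key equivalence is in hand.
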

\begin{proof} Firstly, if $H$ is $N$--bad then $i^\ast_N(e_{(H)_G}S_\bQ)\not\simeq e_{(H)_N}S_\bQ$ as localized $N$--spectra. The reason for that is that $(H)_G$ restricts to more than one conjugacy class of subgroups of $N$. That is why we need a further localization - we only want to consider $(H)_N$.

The composite above forms a Quillen adjunction by Corollary \ref{localizedQAdjunctions}. We use Part 3 from \cite[Corollary 1.3.16]{Hovey}  to show that it is a Quillen equivalence.
Observe that $F_N(G_+,-)$ preserves and reflects weak equivalences between fibrant objects. Let $X$ be a fibrant object in $L_{e_{(H)_N} S_{\bQ}}(N- \mathrm{Sp}^\cO)$. Then $F_N(G_+, X)$ is also fibrant and
\begin{multline}
[G/H_+,e_{(H)_G}F_N(G_+,X)]^{G} \cong [G/H_+, F_N(G_+, X)]^G \\
\cong [i^\ast_N(G/H_+),X]^N\cong [e_{(H)_N}i^\ast_N(G/H_+),e_{(H)_N}X]^N\cong [N/H_+,e_{(H)_N}X]^N
\end{multline} 
Now we need to show that the derived unit is a weak equivalence on the cofibrant generator for $L_{e_{(H)_G}S_{\bQ}}(G-\mathrm{Sp}^\cO)$, which is $e_{(H)_G}G/H_+$. This is 
$$e_{(H)_G}G/H_+ \lra F_N(G_+,e_{(H)_N}i^\ast_N(e_{(H)_G}G/H_+))$$
To check that this is a weak equivalence in $L_{e_{(H)_G}S_{\bQ}}(G-\mathrm{Sp}^\cO)$ it is enough to check that on the homotopy level the induced map
$$[G/H_+, e_{(H)_G} G/H_+]^G \lra [G/H_+,F_N(G_+,e_{(H)_N}i^\ast_N (e_{(H)_G} G/H_+))]^G $$
is an isomorphism. This map fits into a commuting diagram below

\[
\xymatrix@R=2pc@C=1.8pc{
 [G /H_+, e_{(H )_G}G/H_+]^G \ar[dr]^{Li^\ast_N} \ar[d] &  \\
  {[G/H_+, F_N(G_+,e_{(H)_N}i^\ast_N (e_{(H)_G} G/H_+))]^G} \ar[r]^(.53){\cong}& {[i^\ast_N G/H_+, e_{(H)_N}i^\ast_N (e_{(H)_G}G/H_+)]^N} \\ 
}
\]

Since the horizontal map is an isomorphism it is enough to show that $Li^\ast_N$ is an isomorphism.
This follows from the commutative diagram:

\[
\xymatrix@R=1.7pc@C=2pc{
& [S^0, i^*_H (e_{(H)_N}i^*_N(e_{(H)_G}G/H_+))]^H \\
[G/H_+, e_{(H)_G}G/H_+]^G \ar[r]^{\cong} \ar[d] \ar@<-10ex>@/_2.5pc/[dd]_{Li^*_N} & [S^0, i^*_H (e_{(H)_G}G/H_+)]^H \ar[u]^{\cong}  \\
 [i^*_NG/H_+, i^*_N(e_{(H)_G}G/H_+)]^N \ar[r]^{j^*} \ar[d] & [N/H_+, i^*_N (e_{(H)_G}G/H_+)]^N \ar[u]^{\cong} \ar[d] \\
 [i^*_N G/H_+, e_{(H)_N}i^*_N(e_{(H)_G}G/H_+)]^N \ar[r]_{j^*}^{\cong} & [N/H_+, e_{(H)_N}i^*_N(e_{(H)_G}G/H_+)]^N \ar@<-10ex>@/_2.5pc/[uuu]_{\cong} \\
}
\]
where 
$j: N/H_+ \lra i^*_NG/H_+$ is a weak equivalence in $L_{e_{(H)_N}S_\bQ}N-\mathrm{Sp}^\cO$
and $i^*_H$ denotes the restriction functor from $G$--spectra to $H$--spectra.

\end{proof}


\section{A monoidal algebraic model for rational $G$--spectra over an exceptional subgroup}\label{section:monoidal}
The category of rational  $G$--spectra over an exceptional subgroup $H$ is modelled by the left Bousfield localization at an idempotent $e_{(H)_G}$ corresponding to the conjugacy class of $H$ in $G$. Thus from now on we will use the notation $H$ for an exceptional subgroup of $G$, and we will work with the category $L_{e_{(H)_G} S_{\bQ}}(G - \mathrm{Sp}^\cO)$. 
 
The main difference between this approach and what appears in the literature is in replacing Morita equivalence by the inflation--fixed point adjunction. This became possible after analysing an interplay of induction--restriction--coinduction adjunctions with left Bousfield localizations in Section \ref{section:localization}.

The plan for the zig-zag of symmetric monoidal Quillen equivalences is as follows. First we move from the category ${L_{e_{(H)_G}S_{\bQ}}(G- \mathrm{Sp}^\cO)}$ to the category $L_{e_{(H)_N} S_{\bQ}}(N- \mathrm{Sp}^\cO)$ using the restriction--coinduction adjunction. Recall that $N$ denotes the normalizer $N_GH$. 

The second step is to use the inflation--fixed point adjunction between $L_{e_{(H)_N} S_{\bQ}}(N- \mathrm{Sp}^\cO)$  and $L_{e_1 S_{\bQ}}(W- \mathrm{Sp}^\cO)$, where $W$ denotes the Weyl group $N/H$. Recall that $W$ is finite, as $H$ is an exceptional subgroup of $G$ and $e_1$ denotes the idempotent in $\A(W)_\bQ$ corresponding to the trivial subgroup.

Next we use the restriction of universe to pass from $L_{e_1 S_{\bQ}}(W- \mathrm{Sp}^\cO)$ to the category $\mathrm{Sp}^\cO_\bQ[W]$ of rational orthogonal spectra with $W$--action.  Note that we could have combined the two steps above into one, since both left adjoints point the same way, however for the clarity of the arguments we decided to treat them separately.

Now we pass to symmetric spectra with $W$--action using the forgetful functor from orthogonal spectra. Next we move to $H\bQ$-modules with $W$--action in symmetric spectra. From here we use the result of \cite[Theorem 1.1]{ShipleyHZ} to get to $\Ch(\bQ)[W]$, the category of rational chain complexes with $W$--action, which is equivalent as monoidal model category to $\Ch(\bQ[W])$, the category of chain complexes of $\bQ[W]$-modules with a projective model structure. That gives an algebraic model which is compatible with the monoidal product, i.e. the zig-zag of our Quillen equivalences induces a strong symmetric monoidal equivalence on the level of homotopy categories. 

To illustrate the whole path we present a diagram which shows every step of this comparison. The reader may wish to refer to this diagram now, but the notation will be introduced as we proceed. Left Quillen functors are placed on the left. Recall that $N=N_GH$ and $W=W_GH=N_GH/H$.

\[
\xymatrix@R=1.5pc{
L_{e_{(H)_G}S_{\bQ}}(G-\mathrm{Sp}^\cO)
\ar@<-1ex>[d]_{i^{\ast}}
\\
L_{e_{(H)_N}S_{\bQ}}(N-\mathrm{Sp}^\cO)
\ar@<-1ex>[u]_{F_N(G_+,-)}
\ar@<+1ex>[d]^{(-)^H}
\\
L_{e_1S_\bQ}(W-\mathrm{Sp}^\cO)
\ar@<+1ex>[u]^{ \epsilon^\ast}
\ar@<+1ex>[d]^{\mathrm{res}}
\\
\mathrm{Sp}^\cO_\bQ[W]
\ar@<+1ex>[u]^{L}
\ar@<+1ex>[d]^{ \mathrm{Sing}\circ \mathbb{U}}
\\
Sp_\bQ^\Sigma[W]
\ar@<+1ex>[u]^{\mathbb{P} \circ |-|}
\ar@<-1ex>[d]_{H\bQ\wedge -}
\\
(H\bQ-\mathrm{mod})[W]
\ar@<-1ex>[u]_{U}
\ar@<-0ex>[d]_{\mathrm{zig-zag\ of}}
\\
\Ch(\bQ[W])
\ar@<-0ex>[u]_{\mathrm{Quillen\ equivalences}}
}
\]

\subsection{The category $\Ch(\bQ[W]-\mathrm{mod})$}\label{chainExcep}
Before we start describing the zig zag of Quillen equivalences towards the algebraic model for rational  $G$--spectra over an exceptional subgroup, we briefly describe the algebraic model. Suppose $W$ is a finite group. In this section we discuss the category of chain complexes of left $\bQ[W]$ modules.  

Firstly, this category may be equipped with the projective model structure, where weak equivalences are homology isomorphisms and fibrations are levelwise surjections. Cofibrations are levelwise split monomorphisms with cofibrant cokernel. This model structure is cofibrantly generated by \cite[Section 2.3]{Hovey}. 

Note that $\bQ[W]$ is not generally a commutative ring, however it is a Hopf algebra with cocommutative coproduct given by $$\Delta : \bQ[W] \lra \bQ[W] \otimes \bQ[W] \ \ ,\ \ g \mapsto g \otimes g.$$ This allows us to define an associative and commutative tensor product on $\Ch(\bQ[W]-\mathrm{mod})$, namely tensor over $\bQ$, where the action on the $X \otimes_\bQ Y$ is diagonal. The unit is a chain complex with $\bQ$ at the level 0 with trivial $W$--action and zeros everywhere else and it is cofibrant in the projective model structure. The monoidal product defined this way is closed, where the internal hom is given by an internal hom over $\bQ$ with $W$--action given by conjugation.  

This category is equivalent (as a monoidal model category) to the category of $W$--objects in a category of  $\Ch(\bQ-\mathrm{mod})$, with the projective model structure, i.e. this is a model structure which is a transfer of the projective model structure on $\Ch(\bQ-\mathrm{mod})$ to the category of $W$ objects there, using the forgetful functor as a right adjoint. 

It is shown in \cite[Proposition 4.3]{BarnesFiniteG} that this is a monoidal model category satisfying the monoid axiom.
Now we are ready to establish the zig-zag of Quillen equivalences.

\subsection{Monoidal comparison}\label{monoidalH}

At the beginning of this approach we would like to use the inflation--fixed point adjunction. However, as $H$ is not necessary normal in $G$ first we need to move to the category of $N$--orthogonal spectra, where $N=N_GH$. Notice that for our purpose this passage needs to be monoidal.

The inclusion of a subgroup $i: N \lra G$ induces two adjoint pairs between corresponding categories of orthogonal spectra that we discussed earlier. The first choice would be to work with the induction and restriction adjunction. However, in case of our localizations, this is not always a Quillen adjunction as we discussed in detail in Section \ref{section:localization}. The restriction functor $i^{\ast}$ is strong monoidal, so we choose to work with it as a left adjoint, where the right adjoint is the coinduction functor. We showed in Section \ref{section:localization} that this is always a strong monoidal Quillen adjunction for localizations at idempotents corresponding to conjugacy classes of exceptional subgroups.  By Theorem \ref{badLeftAdjoint} it is a Quillen equivalence, so we get the first step of the zig-zag:

\begin{thm} Suppose $H$ is an exceptional subgroup of $G$. Then the composite of adjunctions
\[
\xymatrix@C=3pc{
L_{e_{(H)_G}S_{\bQ}}(G-\mathrm{Sp}^\cO)\
\ar@<+1ex>[r]^{i^\ast}
&
\ L_{i^\ast(e_{(H)_G} S_{\bQ})}(N- \mathrm{Sp}^\cO)\
\ar@<+0.5ex>[l]^{F_N(G_+,-)}
\ar@<+1ex>[r]^(.53){\mathrm{Id}}
&
\ L_{e_{(H)_N} S_{\bQ}}(N- \mathrm{Sp}^\cO)
\ar@<+0.5ex>[l]^(.47){\mathrm{Id}}
}
\]
is a strong symmetric monoidal Quillen equivalence. Notice that if $H$ is $N$--good then $i^*(e_{(H)_G})=e_{(H)_N}$ and the right adjunction is trivial.
\end{thm}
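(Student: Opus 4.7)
The plan is to appeal directly to Theorem~\ref{badLeftAdjoint}, which is the identical statement already established in Section~\ref{section:localization}. The present theorem is essentially a restatement positioned as the first step of the zig-zag of Quillen equivalences, so no new argument is actually required; the work is to package what is already proved and verify that the symmetric monoidal structure descends through the composite.

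Recalling the structure of the proof of Theorem~\ref{badLeftAdjoint}: the composite is a Quillen adjunction by Corollary~\ref{localizedQAdjunctions} (the left factor is restriction--coinduction, the right factor is the identity adjunction into a further localization at an idempotent with $e_{(H)_N}\cdot i^\ast(e_{(H)_G}) = e_{(H)_N}$), and it is a Quillen equivalence by Part~3 of \cite[Corollary~1.3.16]{Hovey}. The key inputs there are that $F_N(G_+,-)$ preserves and reflects weak equivalences between fibrant objects, verified via the chain of isomorphisms
\[
[G/H_+, e_{(H)_G}F_N(G_+,X)]^G \cong [N/H_+, e_{(H)_N} X]^N,
\]
and that the derived unit on the compact generator $e_{(H)_G}G/H_+$ is a weak equivalence, checked through the commuting diagram relating $G$-homotopy classes with $H$-homotopy classes via restriction to $H$.

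For the strong symmetric monoidal claim, I would argue in three steps. First, the restriction functor $i^\ast$ along $N\hookrightarrow G$ is strong symmetric monoidal at the level of orthogonal spectra, and by Lemma~\ref{locAdjAtObject}, applied with $E=e_{(H)_G}S_\bQ$, this property is preserved by left Bousfield localization at a cofibrant object. Second, the identity adjunction between $L_{i^\ast(e_{(H)_G}S_\bQ)}(N\text{-}\mathrm{Sp}^\cO)$ and its further localization $L_{e_{(H)_N}S_\bQ}(N\text{-}\mathrm{Sp}^\cO)$ is trivially strong symmetric monoidal, since the underlying category and smash product are unchanged. Third, the composite of two strong symmetric monoidal Quillen equivalences is again a strong symmetric monoidal Quillen equivalence, so the result follows. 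I do not anticipate any real obstacle here: the only subtlety is already packaged in Lemma~\ref{locAdjAtObject}, which guarantees that the strong monoidality of $i^\ast$ is not destroyed in passing to the localized model structures.
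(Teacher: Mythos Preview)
Your proposal is correct and takes essentially the same approach as the paper: the theorem is a restatement of Theorem~\ref{badLeftAdjoint}, and the paper's own justification is simply the preceding sentence ``By Theorem~\ref{badLeftAdjoint} it is a Quillen equivalence, so we get the first step of the zig-zag.'' Your additional unpacking of the strong symmetric monoidal claim via Lemma~\ref{locAdjAtObject} is more explicit than the paper, but it is exactly the reasoning implicit in the paper's setup (see the discussion around Corollary~\ref{localizedQAdjunctions}), so there is no genuine divergence.
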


Now we use the inflation--fixed point adjunction. Recall that $W$ below denotes the Weyl group $N_GH/H$ and by the assumption on $H$ it is finite. Moreover there is a projection map ${\epsilon: N\lra W}$ which induces the left adjoint below.

\begin{thm}The adjunction
\[
\xymatrix{
\epsilon^{\ast} \ :\ L_{e_1 S_{\bQ}}(W- \mathrm{Sp}^\cO) \  \ar@<+1ex>[r] & \  L_{e_{(H)_N} S_{\bQ}}(N- \mathrm{Sp}^\cO)\ : (-)^H \ar@<+0.5ex>[l]
}
\]
is a strong monoidal Quillen equivalence. Here $e_1$ is the idempotent of the rational Burnside ring $\A(W)_\bQ$ corresponding to the characteristic function for the trivial subgroup. 
\end{thm}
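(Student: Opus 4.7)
The plan is to combine Lemma \ref{locAdjAtObject} with Lemma \ref{qequiv_generators}. The starting point is the strong symmetric monoidal Quillen adjunction $(\epsilon^\ast,(-)^H)$ between $W$--spectra and $N$--spectra (the unlocalised version, already recalled as a corollary of Lemma \ref{locAdjAtObject} in Section \ref{section:localization}), using that $\epsilon^\ast$ is strong symmetric monoidal because it is restriction of the $N$--action along the quotient $\epsilon\colon N\to W$. The key observation is the idempotent identification: the subgroups of $W=N/H$ correspond bijectively under $\epsilon$ to the subgroups of $N$ containing $H$, so the induced map $\epsilon^{\ast}\colon\A(W)_{\bQ}\to\A(N)_{\bQ}$ sends the idempotent $e_1$ (for the trivial subgroup of $W$) to the idempotent $e_{(H)_N}$ (for the conjugacy class of $H$ in $N$). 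Lemma \ref{locAdjAtObject} then delivers the strong monoidal Quillen adjunction between the two localized categories.

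To upgrade to a Quillen equivalence I would apply Lemma \ref{qequiv_generators}. Since both Bousfield localizations are smashing, they preserve the sets of homotopically compact generators. On the $W$--side, $e_1\cdot(W/K_+)\simeq 0$ unless $K$ is the trivial subgroup, so (up to suspension and desuspension) the only surviving generator is $e_1 W_+$. On the $N$--side, $e_{(H)_N}\cdot(N/K_+)\simeq 0$ unless $(K)_N=(H)_N$, leaving $e_{(H)_N}(N/H_+)$. Since $\epsilon^\ast(W_+)=(N/H)_+$, the left adjoint sends $e_1 W_+$ to $e_{(H)_N}(N/H_+)$, so generators map to generators. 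It therefore suffices to verify the derived unit at $e_1 W_+$ and the derived counit at $e_{(H)_N}(N/H_+)$. Both reduce, using the identifications $(N/H)^H=W$ (which holds because $H$ is normal in $N$) and $\pi_\ast^H(X)=\pi_\ast(X^H)$, to the computation that the stable $H$--homotopy of $e_{(H)_N}(N/H_+)$ is $\bQ[W]$ concentrated in degree zero, matching the stable homotopy of $e_1 W_+$ as a $\bQ[W]$--module.

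The main obstacle will be handling the derived counit, because $(-)^H$ is a right adjoint and must therefore be applied to a fibrant replacement in the localized $N$--category, which need not be easy to describe explicitly. Since the localization at $e_{(H)_N}S_\bQ$ is smashing and $H$ is exceptional (so $W$ is finite and there are no proper cotoral subgroups to worry about), only isotropy at $(H)_N$ survives, and $(-)^H$ commutes with the fibrant replacement up to $\pi_\ast^H$--isomorphism. I would close the argument by mimicking the triangle-identity technique of Theorem \ref{badLeftAdjoint}: one of the triangle identities forces post-composition with the derived counit to be surjective on $[e_1 W_+,-]^W_\ast$, and since source and target both have stable $H$--homotopy equal to the finite-dimensional $\bQ[W]$ concentrated in degree zero, surjectivity upgrades to an isomorphism, giving the required $H$--equivalence on generators and hence, by Lemma \ref{qequiv_generators}, the desired Quillen equivalence.
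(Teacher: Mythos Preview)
Your proposal contains a genuine error in the ``key observation'': the claim that $\epsilon^\ast(e_1)=e_{(H)_N}$ in $\A(N)_\bQ$ is false in general. The bijection between subgroups of $W$ and subgroups of $N$ \emph{containing} $H$ is not what governs $\epsilon^\ast$ on Burnside rings. Rather, for any closed $K\leq N$ the mark of $\epsilon^\ast(x)$ at $K$ equals the mark of $x$ at $\epsilon(K)=KH/H$, so $\epsilon^\ast(e_1)$ is the characteristic function of \emph{all} $(K)_N$ with $K\leq H$, not just of $(H)_N$. For finite $G$ and any nontrivial $H$ the trivial subgroup already witnesses $\epsilon^\ast(e_1)\neq e_{(H)_N}$. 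Consequently Lemma~\ref{locAdjAtObject}, applied to the unlocalised pair $(\epsilon^\ast,(-)^H)$ with $E=e_1S_\bQ$, only lands you in $L_{\epsilon^\ast(e_1)S_\bQ}(N-\mathrm{Sp}^\cO)$, a strictly coarser localization than $L_{e_{(H)_N}S_\bQ}(N-\mathrm{Sp}^\cO)$, so you do not yet even have the Quillen adjunction between the two categories in the statement.

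The paper sidesteps this by invoking \cite[Proposition~3.2]{GreenShipleyFixedPoints}, which already supplies a Quillen \emph{equivalence}
\[
\epsilon^\ast\colon W-\mathrm{Sp}^\cO\ \rightleftarrows\ L_{\widetilde{E}[\not\supseteq H]}(N-\mathrm{Sp}^\cO)\colon(-)^H.
\]
The localization at $\widetilde{E}[\not\supseteq H]$ kills isotropy not containing $H$; a further localization at $e_1S_\bQ$ on the left (hence at $\epsilon^\ast(e_1S_\bQ)$ on the right, via Lemma~\ref{locAdjAtObject}, which now \emph{preserves} the equivalence) forces the surviving isotropy also to be contained in $H$. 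The combination singles out precisely $(H)_N$, and the right-hand side becomes $L_{e_{(H)_N}S_\bQ}(N-\mathrm{Sp}^\cO)$. Your route could in principle be repaired by composing with the further localization $L_{\epsilon^\ast(e_1)S_\bQ}(N-\mathrm{Sp}^\cO)\to L_{e_{(H)_N}S_\bQ}(N-\mathrm{Sp}^\cO)$ and then running a generator argument, but the substantive step---that the derived $(-)^H$ on $e_{(H)_N}$-local objects agrees with categorical $H$-fixed points in the expected way---is exactly what the Greenlees--Shipley input encodes, and your triangle-identity sketch does not provide an independent verification of it.
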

\begin{proof}  To prove this is a Quillen pair we refer to \cite[Proposition 3.2]{GreenShipleyFixedPoints} which states that (in notation adapted to our case):
\[
\xymatrix{
\epsilon^{\ast} \ :\ (W- \mathrm{Sp}^\cO) \  \ar@<+1ex>[r] & \  L_{\widetilde{E}[ \not\supseteq H]}(N- \mathrm{Sp}^\cO)\ : (-)^H \ar@<+0.5ex>[l]
}
\]
is a Quillen equivalence. Recall that $\widetilde{E}[ \not\supseteq H]$ is a cofibre of a map $E[ \not\supseteq H] \lra S^0$ where $[ \not\supseteq H]$ denotes the family of subgroups of $N$ not containing $H$. Now we localize this result further at $e_1S_\bQ$ on the side of $W$--spectra and $e_{(H)_N}S_\bQ$ on the side of $N$--spectra. It follows from Lemma \ref{locAdjAtObject} and the fact that $\epsilon^*$ is strong monoidal that the resulting adjunction is a Quillen equivalence. The right hand side after this localization is just  ${L_{e_{(H)_N} S_{\bQ}}(N- \mathrm{Sp}^\cO)}$.
\end{proof}

Next we move from  $L_{e_1 S_{\bQ}}(W- \mathrm{Sp}^\cO)$ to $\mathrm{Sp}^\cO_\bQ[W]$ (where $\mathrm{Sp}^\cO_\bQ=L_{S_\bQ}\mathrm{Sp}^\cO$) using the restriction and extension of $W$--universe from the complete to the trivial one. 
From now on we will work with $W$--objects in a category $\cC$, where $W$ is a finite group. We denote this category by $\cC[W]$. We can think of $\cC[W]$ as a category of functors from $W$, which is a one object category with $Hom(\ast, \ast)=W$ to $\cC$, also known as $\cC^W$. The inclusion $j$ of a terminal category $\mathsf{1}$ into $W$ gives two adjoint pairs $(\lan_j, j^*)$ and $(j^*, \ran_j)$. We will use notation $U$ for $j^*$. It turns out that if $\cC$ is a cofibrantly generated model category, then $\cC[W]$ can be equipped with a model structure by applying transfer \cite[Theorem 11.3.2]{Hirschhorn} to the adjunction below:
\[
\xymatrix{
\lan_j\ :\ \cC\  \ar@<+1ex>[r] & \  \cC[W]\ : \ U \ar@<+0.5ex>[l]
}
\]
Here $\lan_j$ is the left Kan extension along $j$. Notice, that in this case it is sending $X$ to a coproduct of $X$ indexed by elements of $W$, with $W$ acting by permuting the factors. It is a straightforward observation that $U$ preserves cofibrations, as generating cofibrations in $\cC[W]$ are just images of the generating cofibrations in $\cC$ under $\lan_j$. 

If $\cC$ is a closed symmetric monoidal model category then $\cC[W]$ is as well, by analogous observations to those in Section \ref{chainExcep}. Notice that the monoidal product on $W$--objects in $\cC$ is the one from $\cC$ with the diagonal $W$--action and $U_\cC$ is strong monoidal. It is enough to check the pushout--product axiom in $\cC[W]$ for generating cofibrations and acyclic cofibrations, and since they are the images of the generating cofibrations and acyclic cofibrations (respectively) under $\lan_j$ the pushout--product axiom follows from the one in $\cC$. The unit axiom follows from the unit axiom in $\cC$ and the fact that $U$ preserves cofibrations.

\begin{lem}\label{restrictionTotrivialuniverse}The adjunction
\[
\xymatrix{
\mathrm{I}_t^c \ :\ \mathrm{Sp}^\cO_\bQ[W] \  \ar@<+1ex>[r] & \  L_{e_1 S_{\bQ}}(W- \mathrm{Sp}^\cO)\ : \mathrm{I}_c^t=\mathrm{res} \ar@<+0.5ex>[l]
}
\]
is a strong monoidal Quillen equivalence. We use $\mathrm{I}_c^t$ to denote the restriction (denoted also $\mathrm{res}$ above) from the complete $W$--universe to the trivial one. $\mathrm{I}_t^c$ denotes the extension from the trivial $W$--universe to the complete one.
\end{lem}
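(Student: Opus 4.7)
My plan is to build this Quillen equivalence on top of the standard pre-localisation change-of-universe Quillen pair $(\mathrm{I}_t^c, \mathrm{I}_c^t)$ between $\mathrm{Sp}^\cO[W]$ and $W-\mathrm{Sp}^\cO$ and then invoke Lemma~\ref{locAdjAtObject} to descend to the localised categories. Strong symmetric monoidality of $\mathrm{I}_t^c$ at the $W$-object level follows from strong monoidality on underlying orthogonal spectra together with the description of the monoidal product on $W$-objects (the underlying product with diagonal action), as recalled just before the statement. Localising the source at $S_\bQ$ produces, via Lemma~\ref{locAdjAtObject}, a strong monoidal Quillen pair into $L_{\mathrm{I}_t^c(S_\bQ)}(W-\mathrm{Sp}^\cO)$. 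The crucial preliminary step is to identify this target with $L_{e_1 S_\bQ}(W-\mathrm{Sp}^\cO)$ by showing that $\mathrm{I}_t^c(S_\bQ)\simeq e_1 S_\bQ$ rationally: extension of universe from the trivial universe has vanishing geometric fixed points at every nontrivial $K\leq W$, so $\mathrm{I}_t^c(S_\bQ)$ already lies in the summand corresponding to the idempotent $e_1$ for the trivial subgroup, and the two objects then have isomorphic rational equivariant homotopy groups at all subgroups.

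To upgrade the Quillen pair to a Quillen equivalence I would apply Lemma~\ref{qequiv_generators}. Both categories are stable with sets of homotopically compact generators: $\mathrm{Sp}^\cO_\bQ[W]$ is generated by (de)suspensions of $\lan_j S^0 = W_+$ coming from the transferred model structure described before the statement, while $L_{e_1 S_\bQ}(W-\mathrm{Sp}^\cO)$ is generated by the $e_1$-fibrant replacements of the orbits $(W/K)_+$, which for nontrivial $K$ become concentrated in the $H=1$ summand. It then suffices to verify the derived unit and counit on these generators. Since $\mathrm{I}_t^c$ and $\mathrm{I}_c^t$ both act as the identity on the level of underlying non-equivariant orthogonal spectra, and since rational $\pi_*^1$-isomorphism detects weak equivalences in $L_{e_1 S_\bQ}(W-\mathrm{Sp}^\cO)$, these verifications reduce to routine identifications of $\pi_\ast$ via the adjunction $[\lan_j S^0,-]_\ast\cong \pi_\ast(U-)$.

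The main obstacle is the identification $\mathrm{I}_t^c(S_\bQ)\simeq e_1 S_\bQ$ in rational $W$-spectra, which is what makes the localised target from Lemma~\ref{locAdjAtObject} coincide with the category appearing in the statement; the argument hinges on comparing the geometric-fixed-point behaviour of extension of universe with the summand cut out by $e_1$. Once this identification is in place, strong monoidality propagates automatically from the pre-localisation Quillen pair, and the check of the unit and counit on generators is a direct computation with underlying rational homotopy groups.
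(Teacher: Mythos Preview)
Your approach is correct but takes a longer route than the paper's. The paper bypasses Lemma~\ref{locAdjAtObject} and the identification $\mathrm{I}_t^c(S_\bQ)\simeq e_1 S_\bQ$ entirely: it observes directly that the weak equivalences in \emph{both} $\mathrm{Sp}^\cO_\bQ[W]$ and $L_{e_1 S_\bQ}(W-\mathrm{Sp}^\cO)$ are precisely the underlying rational $\pi_*$-isomorphisms (after forgetting to non-equivariant spectra), so $\mathrm{res}=\mathrm{I}_c^t$ preserves and reflects all weak equivalences. Given this, part~3 of \cite[Corollary~1.3.16]{Hovey} reduces the Quillen equivalence to checking only the derived unit on the generator $W_+$, which is an isomorphism on the nose by \cite[Chapter~V, Theorem~1.5]{MandellMay}. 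The Quillen-pair claim itself is handled by checking that $\mathrm{I}_t^c$ preserves generating (acyclic) cofibrations via $\mathrm{I}_t^c F_V \cong F_V$. Strong monoidality is read off directly from \cite[Chapter~V, Theorem~1.5]{MandellMay}.

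Your Bousfield-class identification $\mathrm{I}_t^c(S_\bQ)\simeq e_1 S_\bQ$ is essentially the same fact---both localisations have underlying rational $\pi_*$-isomorphisms as weak equivalences---just packaged differently, so what you call the ``main obstacle'' dissolves once you phrase it this way. One caveat worth making explicit: the assertion that extension of universe from the trivial universe has vanishing geometric fixed points at every nontrivial $K$ holds only for the \emph{derived} functor, i.e.\ on cofibrant objects of $\mathrm{Sp}^\cO[W]$, which are built from free $W$-cells. Point-set $\mathrm{I}_t^c$ applied to the unit yields the genuine equivariant sphere, whose geometric fixed points are nonzero at every subgroup. Since Lemma~\ref{locAdjAtObject} requires a cofibrant $E$, you are implicitly replacing $S_\bQ$ cofibrantly in $\mathrm{Sp}^\cO[W]$; that replacement is $W$-free, and then your geometric-fixed-point argument is valid. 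The paper's route sidesteps this subtlety by never computing $L\mathrm{I}_t^c(S_\bQ)$ and by checking only the unit rather than both unit and counit.
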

\begin{proof} This is a strong monoidal adjunction by \cite[Chapter V, Theorem 1.5]{MandellMay}. Now we note that the left adjoint preserves generating cofibrations and generating acyclic cofibrations, since $\mathrm{I}_t^c F_V \cong F_V$ by \cite[Chapter V, 1.4]{MandellMay}.
 
The right adjoint $\mathrm{res}$ preserves and reflects all weak equivalences  since in both model structures they are defined as those maps which after forgetting to non equivariant spectra are rational $\pi_*$--isomorphisms. The derived unit for the cofibrant generator $W_+$ (in this case categorical unit is also the derived unit) is an isomorphism which follows from \cite[Chapter V, Theorem 1.5]{MandellMay}, and thus for any cofibrant object it is a weak equivalence. By Part 3 of \cite[Corollary 1.3.16]{Hovey} this is a Quillen equivalence.
\end{proof} 

We removed all difficulties coming from the equivariance with respect to a topological group. What is left now is a finite group action on the rational orthogonal spectra. 

For the remaining Quillen equivalences we will need the following, well--known fact.

\begin{prop}\label{Wobjects}Suppose 
\[
\xymatrix{
F\ :\ \cC \  \ar@<+1ex>[r]  & \  \cD\ : \ G \ar@<+0.5ex>[l] 
}
\] 
is a Quillen equivalence and $W$ is a finite group. Then this adjunction induces a Quillen equivalence at the level of $W$--objects in $\cC$ and $\cD$ (with model structures transferred from that on $\cC$ and $\cD$ respectively). Moreover if $(F,G)$ is a weak monoidal Quillen equivalence between monoidal model categories then it is so when induced to the level of $W$--objects in $\cC$ and $\cD$.
\end{prop}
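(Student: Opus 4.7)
The plan is to construct the induced adjunction by pointwise application of $F$ and $G$, and to reduce every condition on the induced pair $(F^W, G^W)$ to the corresponding condition on $(F,G)$ via the forgetful functors $U : \cC[W] \to \cC$ and $U : \cD[W] \to \cD$.

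First I would define $F^W : \cC[W] \to \cD[W]$ and $G^W : \cD[W] \to \cC[W]$ by postcomposition; both are well defined since $F,G$ are functors, and the adjunction $F^W \dashv G^W$ follows immediately from the pointwise adjunction. Because fibrations and weak equivalences in $\cC[W]$ and $\cD[W]$ are, by definition of the transferred model structure, created by the forgetful functors, and because the identity $U G^W = G\, U$ is immediate, the right adjoint $G^W$ preserves fibrations and acyclic fibrations; hence $(F^W, G^W)$ is a Quillen pair.

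The key technical ingredient for the Quillen equivalence is that $U$ preserves cofibrant objects. This holds because the generating (acyclic) cofibrations of $\cC[W]$ are of the form $\lan_j(i)$ for $i$ a generating (acyclic) cofibration of $\cC$, and $U\lan_j(X) = \coprod_W X$ is cofibrant in $\cC$ whenever $X$ is; since colimits in $\cC[W]$ are computed underneath, any cell complex in $\cC[W]$ forgets to a cell complex in $\cC$, and retracts preserve cofibrancy. Granted this, let $X \in \cC[W]$ be cofibrant, $Y \in \cD[W]$ fibrant, and $f : F^W(X) \to Y$ a map with adjunct $\tilde f : X \to G^W(Y)$. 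Using $UF^W = FU$, the map $U(f)$ is the adjunct of $U(\tilde f)$ under $(F,G)$. Since $U(X)$ is cofibrant, $U(Y)$ is fibrant, and $(F,G)$ is a Quillen equivalence, $U(f)$ is a weak equivalence in $\cD$ iff $U(\tilde f)$ is one in $\cC$; these are precisely the conditions for $f$ and $\tilde f$ to be weak equivalences in $\cD[W]$ and $\cC[W]$. Therefore $(F^W, G^W)$ is a Quillen equivalence.

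For the monoidal refinement, the tensor product on $\cC[W]$ is the underlying tensor of $\cC$ equipped with the diagonal $W$-action, so the lax monoidal structure on $G$ and the induced oplax comparison on $F$ promote automatically to maps of $W$-objects. The weak monoidal conditions---that for cofibrant $X, X' \in \cC[W]$ the comparison $F^W(X \otimes X') \to F^W(X) \otimes F^W(X')$ is a weak equivalence, and similarly for the adjoint of the unit comparison on a cofibrant replacement of $\mathbb{1}_{\cC[W]}$---descend via $U$ to the corresponding statements about $U(X), U(X')$ and $U(Q\mathbb{1})$ (which is some cofibrant replacement of $\mathbb{1}_\cC$, though not necessarily the chosen one), and these hold by hypothesis. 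The main obstacle I anticipate is the cofibrancy-preservation of $U$; once it is in hand, the remainder is a systematic bookkeeping exercise of translating model-categorical and monoidal conditions in $\cC[W]$ and $\cD[W]$ back to $\cC$ and $\cD$ along the forgetful functors.
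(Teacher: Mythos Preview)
Your proposal is correct and follows essentially the same approach as the paper: both reduce the Quillen equivalence and the weak monoidal conditions on $(F^W,G^W)$ to those on $(F,G)$ via the forgetful functors $U$, using that $U$ creates weak equivalences and fibrations, preserves cofibrant objects, and is strong monoidal. You have supplied more detail (in particular for why $U$ preserves cofibrant objects and for the adjunct-checking argument) than the paper, which simply records these facts and calls the remainder a diagram chase.
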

\begin{proof}We have the following diagram
\[
\xymatrix@C=3pc@R=3pc{\cC[W] \ar[d]^{U_\cC} \ar@<+1ex>[r]^{F_W} &\cD[W] \ar[d]^{U_\cD} \ar@<+0.5ex>[l]^{G_W}
\\ \cC \ar@<+1ex>[r]^{F} & \cD \ar@<+0.5ex>[l]^{G}} 
\]
where the functors $U_\cC$ and $U_\cD$ commute with both left and right adjoints. Moreover $U_\cC$ and $U_\cD$ create weak equivalences and fibrations and they preserve cofibrant objects (they preserve cofibrations and initial objects) and fibrant objects. A check of the condition from the definition of Quillen equivalence for the adjunction $(F_W,G_W)$ is just a diagram chase.

For the monoidal consideration, recall that the monoidal product on $W$--objects in $\cC$ is the one from $\cC$ with the diagonal $W$--action and $U_\cC$ is strong monoidal. If $(F,G)$ is a weak monoidal Quillen pair then it is again a diagram chase to show that $(F_W,G_W)$ is also a weak monoidal Quillen pair.
\end{proof}

To  apply the result of \cite[Theorem 1.1]{ShipleyHZ} and pass to the category of chain complexes we need to work with rational symmetric spectra in the form of $\mathrm{H}\bQ$-modules (where $H\bQ$ is the Eilenberg--MacLane spectrum for $\bQ$). We pass to this category using the next two lemmas. Both follow from Proposition \ref{Wobjects} and corresponding known results for spectra (See for example Section 7 in \cite{SchwShipEquiv} and recall that $H\bQ$ is weakly equivalent to $S_\bQ$). First we pass to symmetric spectra with a $W$--action using the composition of forgetful functor and the functor induced by singular complex:
\begin{lem}The adjunction
\[
\xymatrix{
\mathbb{P} \circ |-|\ :\ Sp_\bQ^\Sigma[W]\  \ar@<+1ex>[r] & \ \mathrm{Sp}^\cO_\bQ[W] \ : \mathrm{Sing}\circ \mathbb{U} \ar@<+0.5ex>[l]
}
\]
is a strong symmetric monoidal Quillen equivalence. 
\end{lem}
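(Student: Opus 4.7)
The plan is to reduce this to the non-equivariant comparison between rational symmetric spectra and rational orthogonal spectra, and then apply Proposition \ref{Wobjects} to lift everything to $W$--objects.

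First, by the results recalled in Section 7 of \cite{SchwShipEquiv} (originating in the MMSS framework), the composite $(\mathbb{P}\circ|-|, \mathrm{Sing}\circ\mathbb{U})$ forms a strong symmetric monoidal Quillen equivalence between $Sp^\Sigma$ and $\mathrm{Sp}^\cO$. Since $H\bQ$ is weakly equivalent to the rational sphere spectrum $S_\bQ$, and the left adjoint $\mathbb{P}\circ|-|$ is strong monoidal, Lemma \ref{locAdjAtObject} applies to promote this to a strong symmetric monoidal Quillen equivalence between the localized (rationalized) categories $Sp_\bQ^\Sigma$ and $\mathrm{Sp}^\cO_\bQ$.

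Second, I invoke Proposition \ref{Wobjects}: since $W$ is finite and the non-equivariant adjunction above is a (weak) symmetric monoidal Quillen equivalence between cofibrantly generated symmetric monoidal model categories, the induced adjunction on $W$--objects is again a symmetric monoidal Quillen equivalence. The model structures on $Sp_\bQ^\Sigma[W]$ and $\mathrm{Sp}^\cO_\bQ[W]$ are obtained by transfer along the forgetful functors $U$, which commute with both adjoints; the required lifting criteria (that the forgetful functors create weak equivalences and fibrations) are standard. Strong monoidality of the left adjoint on $W$--objects follows from strong monoidality before passing to $W$--objects, since the monoidal product on $W$--objects is computed underlying with diagonal $W$--action.

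The only mild subtlety is ensuring that the naive level-wise realization $|-|$ commutes (up to the relevant structure) with the diagonal $W$--action; this is immediate because $|-|$ is a strong symmetric monoidal left adjoint preserving all colimits, so it sends the $W$--action on a symmetric spectrum object-wise to the $W$--action on the corresponding orthogonal spectrum. The same remark applies to $\mathbb{P}$, $\mathrm{Sing}$, and $\mathbb{U}$. Thus the adjunction as written is the one produced by Proposition \ref{Wobjects}, and the proof is complete.
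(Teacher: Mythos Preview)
Your proposal is correct and follows exactly the approach the paper indicates: the paper simply remarks that the lemma (and the next one) ``follow from Proposition \ref{Wobjects} and corresponding known results for spectra (See for example Section 7 in \cite{SchwShipEquiv} and recall that $H\bQ$ is weakly equivalent to $S_\bQ$)'', and you have spelled out precisely those two ingredients together with the rationalization step via Lemma \ref{locAdjAtObject}.
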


Next we move to $H\bQ$-modules in symmetric spectra with $W$--action.
\begin{lem}\label{SymSpecRationalEquiOrthSpecRational}The adjunction
\[
\xymatrix{
H\bQ \wedge -\ :\ Sp_\bQ^\Sigma[W]\  \ar@<+1ex>[r] & \ (H\bQ-\mathrm{mod})[W] \ : U \ar@<+0.5ex>[l]
}
\]
is a strong symmetric monoidal Quillen equivalence.
Here $U$ denotes forgetful functor and the model structure on $H\bQ-\mathrm{mod}$ is the one created from $Sp^\Sigma$ by the right adjoint $U$.
\end{lem}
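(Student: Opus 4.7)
The plan is to reduce the equivariant statement to its non-equivariant counterpart and then invoke Proposition \ref{Wobjects}.

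First I would recall the non-equivariant version: the adjunction $H\bQ\wedge - : Sp_\bQ^\Sigma \rightleftarrows H\bQ-\mathrm{mod} : U$ is a strong symmetric monoidal Quillen equivalence. By construction the model structure on $H\bQ-\mathrm{mod}$ is transferred along $U$, so $U$ creates fibrations and weak equivalences. Since $H\bQ$ is rationally weakly equivalent to $S_\bQ$, the unit $X\to U(H\bQ\wedge X)$ is a rational $\pi_\ast$-isomorphism on every cofibrant $X\in Sp_\bQ^\Sigma$, which gives the Quillen equivalence (compare Section~7 of \cite{SchwShipEquiv}). Strong symmetric monoidality of the left adjoint comes from the natural isomorphism
\[
H\bQ \wedge (X \wedge Y) \;\cong\; (H\bQ \wedge X)\wedge_{H\bQ}(H\bQ \wedge Y),
\]
which uses only that $H\bQ$ is a commutative ring spectrum; in particular this is a weak monoidal Quillen equivalence in the sense of Proposition \ref{Wobjects}.

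Since $H$ is exceptional the Weyl group $W$ is finite, so Proposition \ref{Wobjects} applies and promotes this adjunction to a weak monoidal Quillen equivalence at the level of $W$-objects. Strong monoidality lifts automatically: on either side the tensor product of $W$-objects is the underlying smash product equipped with the diagonal $W$-action, and the displayed isomorphism is natural in $X$ and $Y$, hence automatically equivariant with respect to the diagonal $W$-actions. Therefore the induced adjunction between $Sp_\bQ^\Sigma[W]$ and $(H\bQ-\mathrm{mod})[W]$ is strong symmetric monoidal.

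The one compatibility check worth flagging is that the model structure on $(H\bQ-\mathrm{mod})[W]$ as described in the statement (transferred from $Sp_\bQ^\Sigma[W]$ along $U$) coincides with the one obtained from Proposition \ref{Wobjects} by taking $W$-objects in the already-transferred model structure on $H\bQ-\mathrm{mod}$. Both are created by the common composite forgetful functor $(H\bQ-\mathrm{mod})[W] \to Sp^\Sigma$, so the two transfers agree. I do not anticipate any genuine obstacle: once Proposition \ref{Wobjects} is available and the underlying non-equivariant Quillen equivalence is in hand, the proof is essentially a formal diagram chase.
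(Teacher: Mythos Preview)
Your proposal is correct and follows exactly the paper's approach: the paper states that this lemma (together with the preceding one) follows from Proposition~\ref{Wobjects} applied to the corresponding known non-equivariant result, citing Section~7 of \cite{SchwShipEquiv} and the fact that $H\bQ$ is weakly equivalent to $S_\bQ$. You have simply fleshed out those two ingredients in more detail, including the helpful observation that the two possible ways of transferring the model structure to $(H\bQ-\mathrm{mod})[W]$ agree.
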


From here we use the result of  \cite[Theorem 1.1]{ShipleyHZ} for $R=\bQ$ and Proposition \ref{Wobjects} to get to $\Ch(\bQ)[W]$ with the projective model structure, which is equivalent as a monoidal model category to $\Ch(\bQ[W])$ with the projective model structure (see Section \ref{chainExcep}).

\begin{lem}There is a zig-zag of monoidal Quillen equivalences between the category $(H\bQ-\mathrm{mod})[W]$ and the category $\Ch(\bQ[W])$ with the projective model structure.
\end{lem}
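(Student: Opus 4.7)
The plan is to transport Shipley's zig-zag from \cite[Theorem 1.1]{ShipleyHZ} to the equivariant setting levelwise, using Proposition \ref{Wobjects}, and then identify the final category with $\Ch(\bQ[W])$.

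First, I would recall that Shipley constructs a zig-zag of weak monoidal Quillen equivalences connecting $H\bQ-\mathrm{mod}$ (in symmetric spectra) with $\Ch(\bQ)$ equipped with the projective model structure, passing through certain intermediate categories of modules over symmetric ring spectra and DGAs. Each category in that zig-zag is a cofibrantly generated, closed symmetric monoidal model category, and each adjunction is a weak monoidal Quillen equivalence.

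Next, for every category $\cC$ appearing in Shipley's zig-zag, I would form the category of $W$-objects $\cC[W]$ with the model structure transferred along the forgetful functor $U_\cC : \cC[W]\lra \cC$, exactly as described in the discussion preceding Lemma \ref{restrictionTotrivialuniverse}. Applying Proposition \ref{Wobjects} to each step produces a zig-zag of weak monoidal Quillen equivalences between the corresponding $W$-object categories, whose endpoints are $(H\bQ-\mathrm{mod})[W]$ and $\Ch(\bQ)[W]$.

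Finally, I would identify $\Ch(\bQ)[W]$ with $\Ch(\bQ[W])$ as monoidal model categories. This is precisely the equivalence indicated in Section \ref{chainExcep}: a $\bQ[W]$-module is the same datum as a $W$-object in $\bQ$-modules, and the tensor product over $\bQ$ with diagonal $W$-action corresponds to the tensor product on $\bQ[W]$-modules induced by the Hopf coproduct $\Delta(g)=g\otimes g$. Both the projective model structure on $\Ch(\bQ[W])$ and the transferred model structure on $\Ch(\bQ)[W]$ are created by the forgetful functor to $\Ch(\bQ)$, so they coincide, and the identity is a strong monoidal Quillen equivalence between them. Composing this with the zig-zag above gives the statement.

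The main obstacle is essentially bookkeeping: one has to verify that the hypotheses of Proposition \ref{Wobjects} apply at every stage of Shipley's zig-zag, i.e.\ that each intermediate category is cofibrantly generated and monoidal in the required sense and that the transferred model structures on $W$-objects exist (which reduces to the standard observation that $W$ is finite, so the smallness and path-object arguments go through). No new homotopical input is needed beyond Shipley's theorem and Proposition \ref{Wobjects}.
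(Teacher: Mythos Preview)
Your proposal is correct and follows exactly the route the paper takes: apply \cite[Theorem 1.1]{ShipleyHZ} for $R=\bQ$, promote each step to $W$--objects via Proposition \ref{Wobjects}, and then invoke the identification $\Ch(\bQ)[W]\simeq \Ch(\bQ[W])$ from Section \ref{chainExcep}. The paper states precisely this (without further elaboration), so your write-up is just a more detailed version of the same argument.
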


We can summarise the results of this section in the Theorem below.
\begin{thm}\label{monoidalExceptional} There is a zig-zag of symmetric monoidal Quillen equivalences from ${L_{e_{(H)_G}S_{\bQ}}(G- \mathrm{Sp}^\cO)}$ to $\Ch(\bQ[W]-\mathrm{mod})$ with the projective model structure, where $W=N_GH/H$.
\end{thm}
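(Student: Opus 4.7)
The plan is to simply concatenate the zig-zag of Quillen equivalences that has been assembled piece by piece through Section \ref{section:monoidal}. Since the conclusion is a composite statement about a chain of equivalences, the proof is essentially a bookkeeping exercise: I verify that each step in the chain has been established as a symmetric monoidal Quillen equivalence and then invoke the fact that the composite of a chain of symmetric monoidal Quillen equivalences (read as zig-zags in the category of model categories with such equivalences as morphisms) is again a zig-zag of symmetric monoidal Quillen equivalences.

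More concretely, I would proceed as follows. First, the restriction--coinduction step from $L_{e_{(H)_G} S_\bQ}(G-\mathrm{Sp}^\cO)$ to $L_{e_{(H)_N} S_\bQ}(N-\mathrm{Sp}^\cO)$ (possibly composed with the trivial change-of-localization adjunction if $H$ is $N$--bad) is a strong symmetric monoidal Quillen equivalence by Theorem \ref{badLeftAdjoint}. Second, the inflation--fixed point adjunction between $L_{e_1 S_\bQ}(W-\mathrm{Sp}^\cO)$ and $L_{e_{(H)_N} S_\bQ}(N-\mathrm{Sp}^\cO)$ is a strong monoidal Quillen equivalence by the second theorem of Subsection \ref{monoidalH}. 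Third, the restriction-of-universe adjunction between $\mathrm{Sp}^\cO_\bQ[W]$ and $L_{e_1 S_\bQ}(W-\mathrm{Sp}^\cO)$ is a strong monoidal Quillen equivalence by Lemma \ref{restrictionTotrivialuniverse}.

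Fourth, we pass from rational orthogonal spectra with $W$--action to rational symmetric spectra with $W$--action via the adjunction $\mathbb{P}\circ|-| \dashv \mathrm{Sing}\circ\mathbb{U}$, which is a strong symmetric monoidal Quillen equivalence by the relevant lemma obtained via Proposition \ref{Wobjects}. Fifth, smashing with $H\bQ$ produces the strong symmetric monoidal Quillen equivalence with $(H\bQ-\mathrm{mod})[W]$ by Lemma \ref{SymSpecRationalEquiOrthSpecRational}. Finally, Shipley's theorem \cite[Theorem 1.1]{ShipleyHZ} combined with Proposition \ref{Wobjects} yields a zig-zag of monoidal Quillen equivalences from $(H\bQ-\mathrm{mod})[W]$ to $\Ch(\bQ)[W]$ with the projective model structure, and the latter is equivalent as a monoidal model category to $\Ch(\bQ[W]-\mathrm{mod})$ with the projective model structure (see Subsection \ref{chainExcep}).

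There is no genuine obstacle at this stage, since all the work has been done in the preceding results; the only thing one has to be slightly careful about is that monoidality is preserved at each step, including at the inflation--fixed point step where the left adjoint $\epsilon^*$ is strong monoidal (so Lemma \ref{locAdjAtObject} applies to transport monoidality through the localization), and at the passage from $\cC$ to $\cC[W]$, where Proposition \ref{Wobjects} guarantees that monoidal Quillen equivalences lift to the $W$--object categories. Assembling these pieces gives the desired zig-zag, which proves the theorem.
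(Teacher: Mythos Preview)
Your proposal is correct and matches the paper's approach exactly: the theorem is stated as a summary of the preceding results in Section~\ref{section:monoidal}, and the proof consists precisely of concatenating the chain of symmetric monoidal Quillen equivalences already established step by step. The paper does not give a separate formal proof beyond the phrase ``We can summarise the results of this section in the Theorem below,'' so your explicit bookkeeping of each step is, if anything, more detailed than what appears in the paper.
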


An example of the application of the result above is to the rational $SO(3)$--spectra over an exceptional subgroup in \cite{KedziorekSO(3)}.

\subsection{Finite $G$}\label{section:finiteG}

If $G$ is finite then every subgroup of $G$ is exceptional and there are finitely many conjugacy classes of subgroups of $G$, thus by splitting result of \cite[Theorem 4.4]{Barnes_Splitting} and Proposition \ref{prop:splittingofexep} the category of rational $G$--spectra splits as a finite product of categories, each localized at an idempotent corresponding to the conjugacy class of a subgroup of $G$. 

\begin{prop}\label{prop:splitting_finite}Suppose $G$ is a finite group. Then there is a strong symmetric monoidal Quillen equivalence:
\[
\xymatrix{
\triangle\ :\ L_{S_\bQ}(G -\mathrm{Sp}^\cO)\ \ar@<+1ex>[r] & \ \prod_{(H)_G, H \leq G} L_{e_{(H)_G} S_\bQ}(G-\mathrm{Sp}^\cO)\ :\Pi \ar@<+0.5ex>[l]
}
\]
where the left adjoint is a diagonal functor, the right one is a product and the product category on the right is considered with objectwise model structure.
\end{prop}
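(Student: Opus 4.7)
The plan is to iterate Proposition \ref{prop:splittingofexep} over the finitely many conjugacy classes of subgroups of $G$. Since $G$ is finite, the space $\cF(G)/G$ is a finite discrete set whose points are precisely the conjugacy classes $(H)_G$ of subgroups of $G$; correspondingly, $\A(G)_\bQ \cong C(\cF(G)/G,\bQ)$ decomposes as a finite product of copies of $\bQ$, and the idempotents $\{e_{(H)_G}\}_{(H)_G}$ form a complete system of pairwise orthogonal primitive idempotents summing to $1$. Every subgroup of a finite group is exceptional (as noted after Definition \ref{defn:exceptional}), so each such idempotent fits the hypotheses of Proposition \ref{prop:splittingofexep}.

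First I would enumerate the conjugacy classes as $(H_1)_G, \dots, (H_n)_G$ and apply Proposition \ref{prop:splittingofexep} with $H = H_1$ to obtain a strong symmetric monoidal Quillen equivalence
\[
L_{S_\bQ}(G-\mathrm{Sp}^\cO) \simeq L_{e_{(H_1)_G}S_\bQ}(G-\mathrm{Sp}^\cO) \times L_{(1-e_{(H_1)_G})S_\bQ}(G-\mathrm{Sp}^\cO).
\]
The idempotent $1 - e_{(H_1)_G}$ equals $\sum_{i \geq 2} e_{(H_i)_G}$, and the residual idempotents $e_{(H_i)_G}$ for $i \geq 2$ continue to form a complete orthogonal decomposition of this new unit. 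Because localization at $1-e_{(H_1)_G}$ followed by localization at $e_{(H_2)_G}$ coincides with localization at the product $(1 - e_{(H_1)_G})\, e_{(H_2)_G} = e_{(H_2)_G}$, a second application of Proposition \ref{prop:splittingofexep} splits off $L_{e_{(H_2)_G}S_\bQ}(G-\mathrm{Sp}^\cO)$ as a direct factor. Iterating $n-1$ times and composing the resulting equivalences produces the stated diagonal-product Quillen equivalence over all conjugacy classes. The composite remains strong symmetric monoidal since composites and finite products of strong symmetric monoidal Quillen equivalences are such.

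The main thing to check is that the iteration is legitimate. This reduces to three observations: the Burnside ring and its idempotents do not change under these localizations; each $H_i$ is exceptional in $G$ as a purely group-theoretic property, independent of the ambient localized model category; and successive localization at a chain of mutually orthogonal idempotents in $\A(G)_\bQ$ composes correctly to match a single localization at the appropriate product idempotent. No substantive obstacle should arise, since the splitting result \cite[Theorem 4.4]{Barnes_Splitting} underlying Proposition \ref{prop:splittingofexep} is designed to accommodate such iteration; indeed one could equally well invoke that theorem in a single step with the full orthogonal family $\{e_{(H)_G}\}_{(H)_G}$ to obtain the $n$-fold splitting directly, and then identify the left and right adjoints with the $n$-fold diagonal and product.
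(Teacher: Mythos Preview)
Your proposal is correct and matches the paper's approach: the paper simply records that when $G$ is finite every subgroup is exceptional and there are finitely many conjugacy classes, and then appeals to \cite[Theorem 4.4]{Barnes_Splitting} together with Proposition \ref{prop:splittingofexep} to obtain the splitting, without writing out the iteration. Your version spells out the inductive step and the orthogonality bookkeeping explicitly, and also notes (as the paper implicitly does) that one may invoke Barnes' theorem for the full orthogonal family in one shot; both routes are exactly what the paper has in mind.
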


This observation allows us to deduce the following

\begin{cor}\label{finite_G_algmod}Suppose $G$ is a finite group. Then there is a zig-zag of symmetric monoidal Quillen equivalences from $L_{S_\bQ}(G- \mathrm{Sp}^\cO)$ to $$\prod_{(H)_G,H\leq G}\Ch(\bQ[W_GH]-\mathrm{mod}).$$
\end{cor}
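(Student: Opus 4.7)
The plan is to chain together the two main results already established: Proposition \ref{prop:splitting_finite} to split rational $G$--spectra into a finite product indexed by conjugacy classes of subgroups, and then Theorem \ref{monoidalExceptional} applied in each factor. Because every subgroup of a finite group $G$ satisfies the conditions of Definition \ref{defn:exceptional} (the Weyl group of any subgroup of a finite group is finite, and the characteristic function on any single conjugacy class of subgroups is a continuous function on $\cF(G)/G$, so the idempotent $e_{(H)_G}$ always exists; moreover no cotoral containment can occur when $G$ is finite), Theorem \ref{monoidalExceptional} is available for every $H\leq G$.

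First I would invoke Proposition \ref{prop:splitting_finite} to obtain the strong symmetric monoidal Quillen equivalence
\[
\triangle\ :\ L_{S_\bQ}(G -\mathrm{Sp}^\cO)\ \rightleftarrows\ \prod_{(H)_G,\,H\leq G} L_{e_{(H)_G} S_\bQ}(G-\mathrm{Sp}^\cO)\ :\Pi,
\]
with the objectwise model structure on the right. Then, for each conjugacy class $(H)_G$, I apply Theorem \ref{monoidalExceptional} to produce a zig-zag of symmetric monoidal Quillen equivalences from $L_{e_{(H)_G} S_\bQ}(G-\mathrm{Sp}^\cO)$ to $\Ch(\bQ[W_GH]-\mathrm{mod})$ with the projective model structure.

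The remaining step is to promote a finite collection of zig-zags of symmetric monoidal Quillen equivalences between categories $\C_i$ and $\D_i$ (indexed by $(H)_G$) into a single zig-zag between the product categories $\prod_i \C_i$ and $\prod_i \D_i$, both equipped with the objectwise model structure and the objectwise symmetric monoidal structure. This is routine: a finite product of Quillen adjunctions is a Quillen adjunction for the objectwise model structure (cofibrations, fibrations, and weak equivalences are all detected objectwise, and the product of left/right Quillen functors preserves the respective classes componentwise), a finite product of Quillen equivalences is again a Quillen equivalence (the derived unit and counit of the product adjunction are the objectwise derived unit and counit), and products of strong symmetric monoidal functors are strong symmetric monoidal for the objectwise monoidal structure. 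One must also align the intermediate categories of the zig-zags; since there are only finitely many conjugacy classes, one can pad any shorter zig-zag with identity Quillen equivalences so that all zig-zags have the same length, and then take products level by level.

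Assembling these pieces, one obtains the desired zig-zag of symmetric monoidal Quillen equivalences
\[
L_{S_\bQ}(G-\mathrm{Sp}^\cO)\ \rightleftarrows\ \prod_{(H)_G,\,H\leq G} L_{e_{(H)_G} S_\bQ}(G-\mathrm{Sp}^\cO)\ \rightleftarrows\ \cdots\ \rightleftarrows\ \prod_{(H)_G,\,H\leq G}\Ch(\bQ[W_GH]-\mathrm{mod}).
\]
I do not anticipate a serious obstacle: the genuinely hard content is already encoded in Theorem \ref{monoidalExceptional} and in the splitting result from \cite{Barnes_Splitting}; the only minor bookkeeping point is verifying that taking finite products preserves the symmetric monoidal Quillen equivalence structure level by level, which is a direct diagram chase analogous to (and simpler than) the proof of Proposition \ref{Wobjects}.
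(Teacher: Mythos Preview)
Your proposal is correct and follows exactly the paper's approach: the paper's proof simply reads ``This follows from Proposition \ref{prop:splitting_finite} and Theorem \ref{monoidalExceptional}.'' You have supplied more detail than the paper does, spelling out why every subgroup of a finite $G$ is exceptional and how to assemble the finite product of zig-zags, but the underlying strategy is identical.
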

\begin{proof}This follows from Proposition \ref{prop:splitting_finite} and Theorem \ref{monoidalExceptional}.
 \end{proof}

We remark that this is not a new result, as for a finite group $G$ an algebraic model was given in \cite{SchwedeShipleyMorita} and monoidal consideration was presented in \cite{BarnesFiniteG}. However, the use of localizations of commutative ring $G$--spectra in the proof of \cite{BarnesFiniteG} requires adaptations from \cite{BlumbergHillL1}. Our proof avoids these issues.

\end{document}